 \newtheorem{thm}{Theorem}[section]
 \newtheorem{cor}{Corollary}[section]
 \newtheorem{prop}{Proposition}[section]
 \newtheorem{lemma}{Lemma}[section]
 \theoremstyle{definition}
 \newtheorem{defi}{Definition}[section]
 \theoremstyle{remark}
 \newtheorem{remark}{Remark}
 \newcommand{\bs}[1]{\boldsymbol{#1}}
 \newcommand{\bl}{{\bs{\lambda}}}
 \newcommand{\C}{\mathbb{C}}
 \newcommand{\D}{\mathcal{D}}
 \newcommand{\E}{\mathbb{E}}
 \newcommand{\etal}{{\it et al}.\ }
 \newcommand{\K}{\mathcal{K}}
 \newcommand{\N}{\mathbb{N}}
 \newcommand{\R}{\mathbb{R}}
 \newcommand{\T}{\mathbb{T}}
 \newcommand{\X}{\bs{X}}
 \newcommand{\W}{\mathcal{W}}
 \newcommand{\fim}{\begin{flushright}
 \vspace{-.5cm}$\diamondsuit$\end{flushright} }
 \renewcommand{\geq}{\geqslant} 
 \renewcommand{\leq}{\leqslant} 
 \numberwithin{equation}{section}
 \numberwithin{table}{section}
 \newcounter{ex} \counterwithin{ex}{section}
 \newenvironment{example}[1]{%
 \par\vspace{\baselineskip}
 \refstepcounter{ex}\noindent
 \textbf{Example~\theex.}}
 \long\def\symbolfootnote[#1]#2{\begingroup%
 \def\thefootnote{\fnsymbol{footnote}}\footnote[#1]{#2}\endgroup}
\begin{document}

\thispagestyle{empty}

 \vskip2cm
 {\centering
 \Large{\bf Pentadiagonal Matrices and an Application to the Centered
MA(1) Stationary Gaussian
Process}\\
 }
 \vspace{1.0cm}
 \centerline{\large{\bf{Maicon J.\ Karling,
Artur O.\ Lopes and S\'{i}lvia R.C.\ Lopes \symbolfootnote[3]{Corresponding
author. E-mail: silviarc.lopes@gmail.com}}}}

 \vspace{0.5cm}

 \centerline{Mathematics and Statistics Institute}
 \centerline{Federal University of Rio Grande do Sul}
 \centerline{Porto Alegre, RS, Brazil}

 \vspace{0.5cm}

 \centerline{\today}

 \begin{abstract} \noindent
In this work, we study the properties of a pentadiagonal symmetric matrix
with perturbed corners. More specifically, we present explicit expressions for
characterizing when this matrix is non-negative and positive definite in two
special and important cases. We also give
a closed expression for the determinant of such matrices. Previous works
present the determinant in a recurrence form but not in an explicit one.  As an
application of these results, we also study the
limiting cumulant generating function associated to the bivariate sequence of
random vectors $\left(n^{-1} (\sum_{k=1}^n X_k^2\,,\ \sum_{k=2}^n X_k
X_{k-1})\right)_{n \in \N}$, when $(X_n)_{n \in \N}$ is the centered
stationary moving average process of first order
with Gaussian innovations. We exhibit the explicit expression of this
limiting cumulant generating function. Finally, we present three examples
illustrating the techniques studied here.

\vspace{2mm} \noindent
\textbf{Keywords:} Pentadiagonal symmetric matrices, Determinant,
Eigenvalues, Non-negative and Positive definite matrices, Moving average
process, Limiting cumulant generating function, Time series.

\vspace{2mm} \noindent
\textbf{2010 Mathematics Subject Classification:} 11E25, 15A15, 15A18, 15B05,
40A05, 60G10.
\end{abstract}


\section{Introduction} 

Pentadiagonal matrices have been explored in many possible ways in recent
decades, most of them for the symmetric case (sometimes, assuming that the symmetric matrix is Toeplitz). Some results address the analysis of its eigenvalues (see Elouafi \cite{elouafi11} and Fasino \cite{fasino88}), others  focus on explicit formulas for its determinant (see Elouafi \cite{elouafi13, elouafi18}, Jia \etal \cite{jia16}, Marr and
Vineyard \cite{marr88} and Solary \cite{solary20}). Other authors examine faster
algorithms for computing the determinant of such matrices (see Cinkir
\cite{cinkir12} and Sogabe \cite{sogabe08}), its use in solving systems of linear
equations (see Jia \etal \cite{jia12}, McNally \cite{mcnally10} and
Nemani \cite{nemani10}), and in the search of explicit formulas for the inverse matrix
(see Wang \etal \cite{wang15} and Zhao and Huang \cite{zhao08}).

However, there are not many works dedicated to the case of pentadiagonal matrices with perturbed corners; to be defined below.

A pentadiagonal matrix is described in the literature as having zeros
everywhere except in its five principal diagonals. In the present work, we
shall consider the following pentadiagonal matrix with perturbed corners
   \begin{equation}\label{matrizdn}
    D_n = \left[%
    \begin{array}{cccccc}
     r & q & s & 0 & \cdots & 0\\
     q & p & q & s & \ddots & \vdots \\
     s & q & p & \ddots & \ddots & 0 \\
     0 & s & \ddots & \ddots & q & s\\
     \vdots & \ddots & \ddots & q & p & q \\
     0 & \cdots & 0 & s & q & r\\
    \end{array}\right].
   \end{equation}
Our purpose with this study is to present few properties of the matrix
$D_n$, with relation to its determinant and positive and non-negative
definiteness. Working around with the matrix $D_n$ is non-trivial. The
pentadiagonal matrices found in Cinkir \cite{cinkir12}, Elouafi
\cite{elouafi13}, Wang \etal \cite{wang15}, or Jia \etal \cite{jia16} serve as
particular cases from the matrix presented in \eqref{matrizdn}. A more advanced
study is given in Solary \cite{solary20}, where the author presents
computational properties for a pentadiagonal band matrix with perturbed
corners, similar to ours, but the elements are disposed in $N \times N$ blocks
of $m \times m$ matrices in its five main diagonals, with $m, N \in \N$.

As we will show here, a particular case of the pentadiagonal matrix in
\eqref{matrizdn} appears in a problem relating to the centered stationary
moving average process of first order (MA(1)) with Gaussian innovations, defined
by the equation
\begin{align*}
 X_n = \varepsilon_n + \phi\, \varepsilon_{n-1}, \quad \mbox{with } |\phi|<1
\mbox{ and } n \in \N,
\end{align*}
where $(\varepsilon_n)_{n \geq 0}$ is a sequence of independent and identically
distributed (i.i.d.) random variables following a Gaussian distribution with
zero mean and unitary variance ($\varepsilon_n \sim \mathcal{N}(0,1)$, for
all $n
\geq 0$). We are interested in the asymptotics of the bivariate
\emph{normalized cumulant generating function}
\begin{equation*}
  L_n(\bl) = \frac{1}{n}\log \mathbb{E}(\exp(n
\langle(\lambda_1, \lambda_2),\W_n\rangle)) = \frac{1}{n} \log
\left(\mathbb{E}\exp \left[\lambda_1 U_n + \lambda_2 V_n \right]\right), \quad
\mbox{for } \bl = (\lambda_1, \lambda_2) \in \R^2,
\end{equation*}
associated to the random vectors sequence $(\W_n)_{n \geq
2}$, where
\begin{equation}\label{Wn}
 \W_n = n^{-1}(U_n, V_n) = n^{-1}\left(\sum_{k=1}^n X_k^2,\
\sum_{k=2}^n X_k X_{k-1}\right).
\end{equation}
The results we obtain for pentadiagonal matrices
will help us in this direction. The main result in this part of the paper
is to give an explicit expression for the limit $\mathcal{L}(\bl) := \lim_{n
\to
\infty} L_n(\bl)$, when it is well defined. A similar discussion appeared in
Karling \etal
\cite{karling20}, where the authors analyzed the bivariate normalized
cumulant generating function associated with the sequence $(\W_n)_{n \geq
2}$, when $(X_n)_{n \in \N}$ is a centered stationary autoregressive
process of first order with Gaussian innovations. In that work, the treatment of the positive definiteness of a tridiagonal matrix was required.

The  normalized cumulant generating function is of great help for obtaining the moments of a given random vector. We point out that for
the practical  use of this property it is required to have an explicit expression for it.
The analytic expression we obtain for $\mathcal{L}(\cdot)$ is quite complex (see Proposition \ref{propconvLn}) but its partial derivatives can be calculated using the Wolfram Mathematica software.

The present work is organized as follows.  Section \ref{secpos} is dedicated to
obtaining  a closed expression for the domain when $D_n$ is non-negative
definite in the presence of the restriction $r \geq p - s$. Furthermore, we
analyze the special case $r = p-s$ to give the explicit domain for which $D_n$
is a positive definite matrix. In Section \ref{SecDeterminante} we compute the
determinant of the matrix $D_n$ by using a recurrence relation proposed in
Sweet \cite{sweet69}. An application to the MA(1) process is presented in
Section 4, where we analyze the asymptotic behavior of the bivariate normalized
cumulant generating function associated to the sequence $(\W_n)_{n \geq 2}$,
given in \eqref{Wn}, and we provide its limiting function. A few examples to
illustrate the theory in practice are exhibited in Section 5. In
Section \ref{conclusionsection} some conclusions are presented.


 \section{Non-negative and positive definiteness of $\bs{D_n}$}
\label{secpos}

We scrutinize in the following subsections when the matrix $D_n$
in \eqref{matrizdn} is non-negative definite if the
restriction $r \geq p - s$ is considered. In addition to this, a sharper result
can be provided for the positive definiteness of $D_n$ in the special case when
$r = p - s$. Both reasonings rely on the results proved in Fasino
\cite{fasino88} and Solary \cite{solary20}. Despite being well known, we recall
two equivalent definitions of non-negative (positive) definite matrices in the
real symmetric case.

 \begin{defi}
  A real symmetric matrix $M = [m_{i,j}]_{n \times n}$ of order $n \times n$ is
said to be non-negative (positive) definite if (see Gilbert \cite{gilbert91} and
Horn \cite{horn13}):
\begin{enumerate}
 \item the scalar $\bs{x}^T M \bs{x}$ is non-negative (positive) for every
non-zero column vector $\bs{x} \in \R^n$;

 \item the eigenvalues of $M$ are all non-negative (positive).
\end{enumerate}
\end{defi}

\subsection{Case $\bs{r \geq p-s}$}

The approach presented in Fasino \cite{fasino88} yields a nice criterion based
on a second‐order polynomial to determine when $D_n$ in \eqref{matrizdn} is a
non-negative definite matrix. We use this criterion to provide an explicit
expression for the domain which characterizes when $D_n$ is non-negative
definite. It is although necessary to require a priori that
$r \geq p - s$.

\begin{lemma}\label{fasino}
 Let $D_n$ be the pentadiagonal matrix defined in \eqref{matrizdn} with $p \geq
0$. Consider the sets
 \begin{equation}\label{D1234}
 \begin{split}
  & \D_1 = \left\{-\frac{p}{2} \leq s < 0 \,,\ -\frac{1}{2}(p+2s) \leq
q \leq \frac{1}{2}(p+2s)\right\},\\
  & \D_2 = \left\{s = 0 \,, \ p \geq 2|q|\right\},\\[1mm]
  & \D_3 = \left\{0 < s \leq \frac{p}{2} \,,\ -\sqrt{4s(p-2s)}\leq q\leq
\sqrt{4s(p-2s)}  \right\} \mbox{ and}\\
  &\D_4 = \left\{0 < s < \frac{p}{6} \,,\ -\frac{1}{2}(p+2s) \leq q <
-\sqrt{4s(p-2s)} \ \lor \, \sqrt{4s(p-2s)} < q \leq \frac{1}{2}(p+2s)\right\}.
 \end{split}
 \end{equation}
 If $r \geq p - s$ and $p, q, s$ lie inside $\D_1 \cup
\D_2 \cup \D_3 \cup \D_4$, then $D_n$ is non-negative definite for all $n
\in \N$.
\end{lemma}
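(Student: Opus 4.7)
The plan is to reduce the problem to a symbol non-negativity analysis in the spirit of Fasino \cite{fasino88}.

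First, I would isolate the corner perturbation. Let $D_n^0$ denote the matrix obtained from $D_n$ by replacing the corner entry $r$ with $p-s$. Then
\begin{equation*}
D_n \;=\; D_n^0 \;+\; (r - p + s)\bigl(e_1 e_1^T + e_n e_n^T\bigr),
\end{equation*}
where $e_1, e_n \in \R^n$ are the first and last canonical vectors. Because $r \geq p - s$, the rank-two correction is non-negative definite, so it suffices to prove that $D_n^0$ is non-negative definite for every $n \in \N$. The special value $r = p - s$ is precisely the boundary correction under which $D_n^0$ falls into the class treated by Fasino.

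Second, I would apply Fasino's spectral criterion to $D_n^0$: his analysis shows that the eigenvalues of this particular pentadiagonal symmetric matrix lie in the range of the generating symbol
\begin{equation*}
f(\theta) \;=\; p + 2q\cos\theta + 2s\cos(2\theta), \qquad \theta \in [0,\pi].
\end{equation*}
Consequently $D_n^0$ is non-negative definite for all $n$ if and only if $f(\theta) \geq 0$ for every $\theta$.

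Third, I would characterize when $f \geq 0$. Using $\cos(2\theta) = 2\cos^2\theta - 1$ and substituting $x = \cos\theta \in [-1,1]$, non-negativity of $f$ reduces to non-negativity on $[-1,1]$ of the quadratic
\begin{equation*}
g(x) \;=\; 4s\,x^2 + 2q\,x + (p - 2s).
\end{equation*}
A case split on the sign of $s$ then produces the four regions. When $s<0$, $g$ is concave, so the minimum on $[-1,1]$ is attained at $x=\pm 1$; combined with $p+2s \geq 0$ this yields $\D_1$. When $s=0$, $g$ is affine, and $p \geq 2|q|$ gives $\D_2$. When $s>0$, the discriminant of $g$ is $4(q^2 - 4s(p-2s))$; if non-positive, then $g \geq 0$ on all of $\R$ and one obtains $\D_3$ (which forces $p \geq 2s$). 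If the discriminant is positive, one needs both $g(\pm 1) \geq 0$ and the vertex $x^* = -q/(4s)$ lying outside $[-1,1]$, conditions that force $s < p/6$ and deliver $\D_4$.

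The main obstacle is the second step: verifying that Fasino's theorem applies to $D_n^0$ in the desired uniform-in-$n$ form. The specific boundary correction $r = p - s$ at the corners is exactly what aligns $D_n^0$ with the symbol-generated class in \cite{fasino88} and what allows the analytic non-negativity of $f$ to be transferred into matrix non-negativity for all $n \in \N$. Once this link is in place, the quadratic analysis producing $\D_1, \D_2, \D_3, \D_4$ is routine; the only delicate point lies in the $s>0$ case, where one must track how the three thresholds $\sqrt{4s(p-2s)}$, $4s$, and $(p+2s)/2$ interleave to know whether the vertex or the endpoints of $[-1,1]$ govern the minimum of $g$.
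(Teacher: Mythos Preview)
Your proposal is correct and follows essentially the same route as the paper: both reduce the question to the non-negativity of the quadratic symbol $g$ on a compact interval (the paper uses $g(x)=sx^2+qx+(p-2s)$ on $[-2,2]$, which is your $g$ after the substitution $x\mapsto x/2$), and then perform the same sign-of-$s$ case split to carve out $\D_1,\D_2,\D_3,\D_4$. The one presentational difference is that you make the role of the hypothesis $r\geq p-s$ explicit through the rank-two decomposition $D_n=D_n^0+(r-p+s)(e_1e_1^T+e_ne_n^T)$, whereas the paper absorbs this into its citation of Fasino's Proposition~5; similarly, in the $s<0$ and $s>0$ subcases you argue via the location of the vertex and the endpoint values of $g$, while the paper phrases the same constraints in terms of the location of the roots $x_1,x_2$ relative to $\pm 2$---these are equivalent reformulations of the same elementary analysis.
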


\begin{proof}
  First we observe that if $p = 0$, then, the only possible case where $D_n$
might be non-negative definite is the trivial one, when $p = q = s = 0$. Thus,
we can assume hereafter that $p > 0$. The remaining of the proof stands on
proposition 5 in Fasino \cite{fasino88}, which states that, given
\begin{equation}\label{polyg}
 g(x) = s x^2 + q x + (p - 2s), \quad \mbox{for } x \in \R,
\end{equation}
the matrix $D_n$ is non-negative definite, for all $n \in \N$, if and
only if $g(x) \geq 0$, for all $x \in [-2,2]$.

We separate our analysis in three cases:
\begin{itemize}
 \item {\it Case $s < 0$:} by hypothesis $p > 0$, hence, it follows that
  $q^2 - 4s(p-2s) > 0$ and the equation $g(x) = 0$ has two real roots, given by
\begin{equation}\label{x12}
 x_1 = \frac{-q -\sqrt{q^2-4s(p-2s)}}{2s} \quad \mbox{and} \quad x_2 =
\frac{-q+\sqrt{q^2-4s(p-2s)}}{2s}.
\end{equation}
For the condition $g(x) \geq 0$ to be true for all $x \in [-2,2]$, we must
have simultaneously $x_2 \leq -2$ and $x_1 \geq 2$. The latter relations are
verified if and only if $p, q, s$ lie inside $\D_1$.

\item {\it Case $s = 0$:} in this case, notice that $D_n$ is a tridiagonal
matrix and that $g(x) = q x + p$. Therefore, if $p \geq 2|q|$, then $g(x) \geq
0$ for all $x \in [-2,2]$. Hence, $p, q, s$ must lie inside $\D_2$ for $D_n$
to be non-negative definite.

\item {\it Case $s > 0$:} here we observe that there are two possibilities.
Either $q^2 - 4s(p-2s) \leq 0$ and $g(x) \geq 0$, for all $x
\in \R$, or either $q^2 - 4s(p-2s) > 0$ and $g(x) = 0$
has two real distinct roots, namely, $x_1$ and $x_2$ given in \eqref{x12}. In
the former case, $p, q, s$ must lie inside $\D_3$. In the later case, $g(x)
\geq
0$, for all $x \in [-2,2]$, if and only if $x_2 \leq
-2$ or $x_1 \geq 2$, which gives us the domain $\D_4$ in \eqref{D1234}.
\end{itemize} \vspace{-6mm} \end{proof}

\begin{remark} Note that, if $p, q, s$ belong to $\D_1 \cup \D_2 \cup \D_3 \cup
\D_4$ and $p \geq 0$, then $r \geq p -
s$ implies that $r \geq 0$.
\end{remark}

\begin{remark}
 When considering proposition 5 in Fasino \cite{fasino88}, the term positive
definite should be read as non-negative definite. Additionally, the same
proposition cannot be proved for positive definite matrices in the strict
positive sense, i.e., by just replacing the condition $g(x) \geq 0$, for all $x
\in [-2,2]$, by $g(x) > 0$, for all $x \in [-2,2]$.
\end{remark}

An illustration of the domain $\D_1 \cup \D_2 \cup \D_3 \cup \D_4$ is given
in Figure \ref{domainD1234}. We note that outside this set it may happen
that $D_n$ is non-negative definite for some $n \in \N$, but this does not
generate a contradiction to the result of Lemma \ref{fasino}. In fact, the
statement of this lemma considers the non-negative definiteness of the matrices
$D_n$ for all $n \in \N$.

\begin{figure}[ht!]
 \begin{center}
   \includegraphics[scale =0.35]{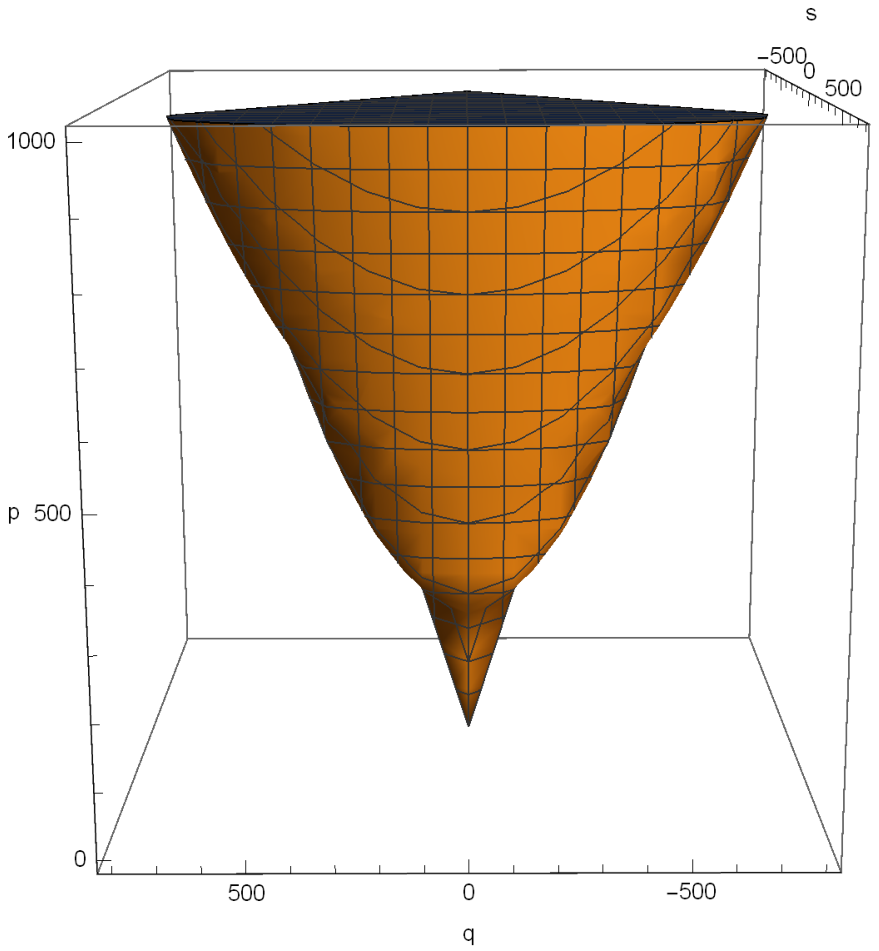} \quad
   \includegraphics[scale =0.35]{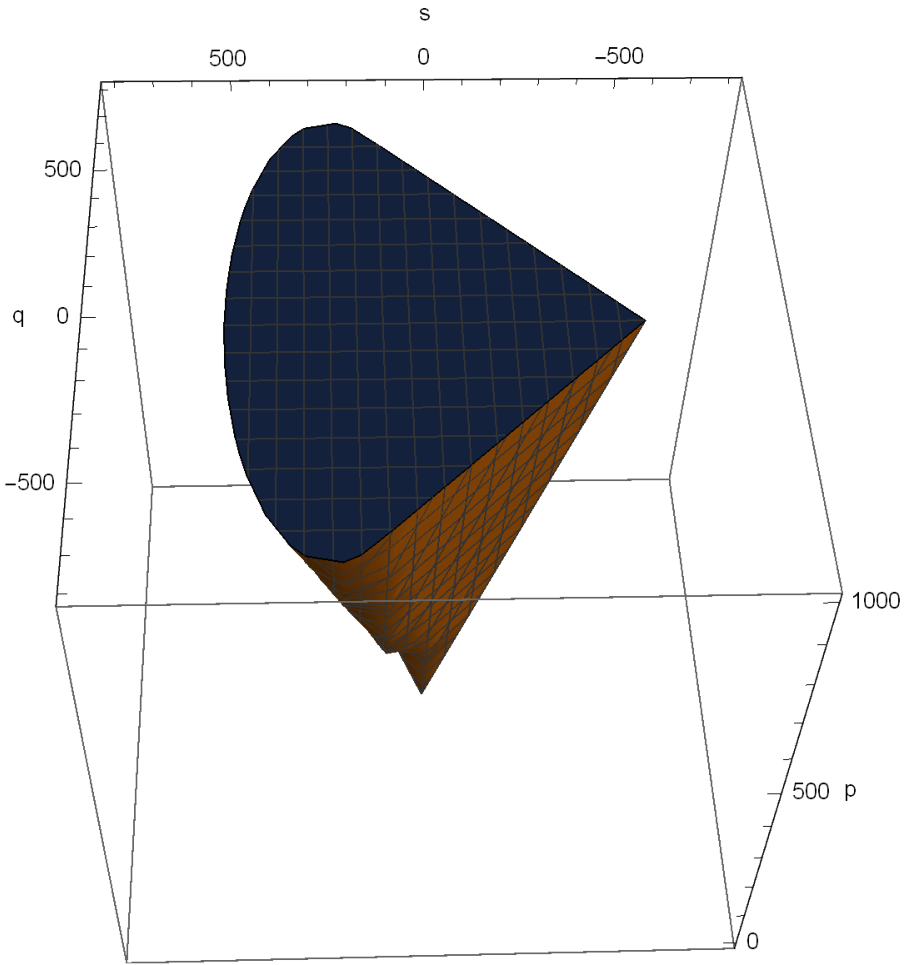} \quad
   \includegraphics[scale =0.35]{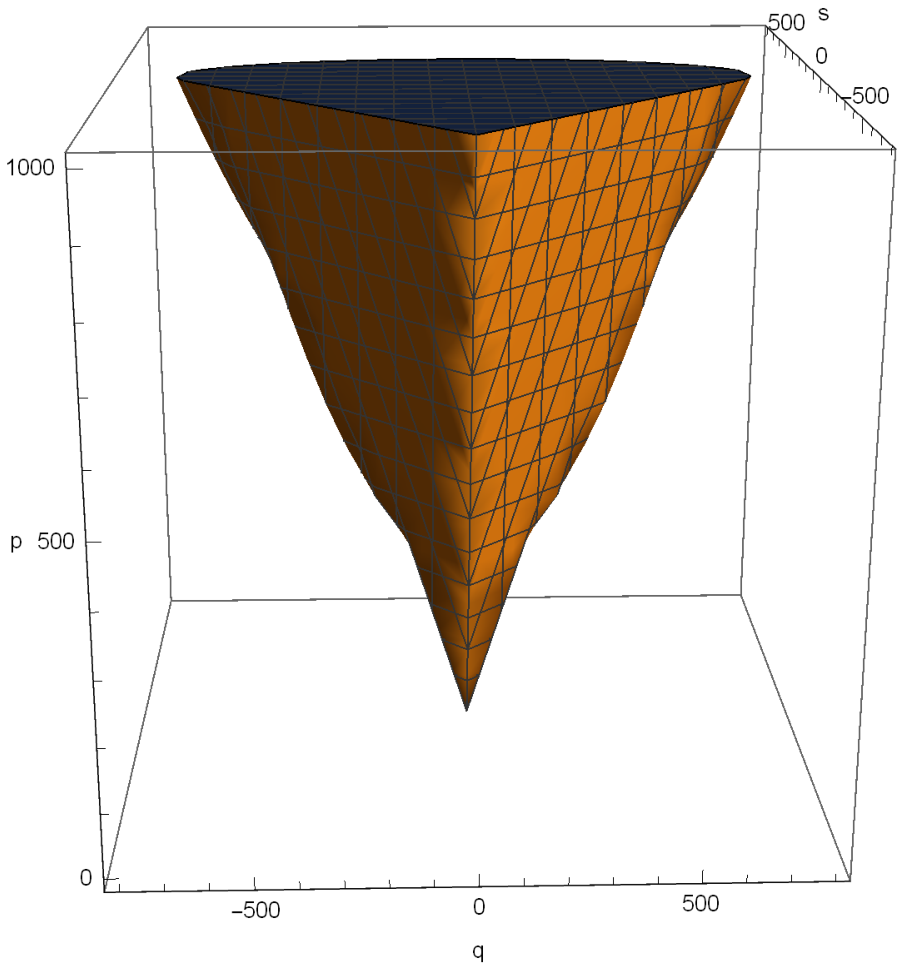}
   \caption{Domain $\D_1 \cup \D_2 \cup \D_3 \cup \D_4$
illustrated, for the cases when $s ,q \in [-800,800]$ and $p
\in [0,1000]$.
\label{domainD1234}}
 \end{center}
\end{figure}

\subsection{Special case $\bs{r = p-s}$}\label{specsec}
It may happen that $r = p-s$ and as a consequence we obtain the
following.

\begin{lemma}\label{eigelema}
If the elements of the matrix $D_n$ in \eqref{matrizdn} satisfy the relation $r
= p-s$, then its eigenvalues are given by
\begin{equation*}
 \alpha_{n,k} = 4 s \cos^2\left(\frac{k \pi}{n+1}\right) +2q \cos\left(\frac{k
\pi}{n+1}\right) +p-2s, \quad \mbox{for } \ 1 \leq k \leq n.
\end{equation*}
\end{lemma}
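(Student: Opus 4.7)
The plan is to exhibit $D_n$ as an explicit polynomial in the standard symmetric tridiagonal matrix
\begin{equation*}
T_n = \begin{pmatrix} 0 & 1 & 0 & \cdots & 0 \\ 1 & 0 & 1 & \ddots & \vdots \\ 0 & 1 & \ddots & \ddots & 0 \\ \vdots & \ddots & \ddots & 0 & 1 \\ 0 & \cdots & 0 & 1 & 0 \end{pmatrix},
\end{equation*}
whose eigenvalues $\mu_k = 2\cos(k\pi/(n+1))$, for $1 \leq k \leq n$, with eigenvectors $v_k(j) = \sin(jk\pi/(n+1))$, are classical and follow from a short trigonometric identity.

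The crux is the matrix identity $D_n = sT_n^2 + qT_n + (p-2s)I$, which holds \emph{precisely} under the assumption $r = p-s$. One checks by direct computation that $T_n^2$ is pentadiagonal with the value $2$ along its main diagonal except at the two corners, where the value is $1$, with zeros on the first off-diagonals, and with ones on the second off-diagonals. Comparing entry by entry with $D_n$ in \eqref{matrizdn}, the interior diagonal gives $2s + (p-2s) = p$, the first off-diagonals give $q$, the second off-diagonals give $s$, and each corner entry becomes $s + (p-2s) = p - s$, which is exactly the hypothesized value of $r$.

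With the identity in hand, the proof is completed by the functional calculus for real symmetric matrices: since $T_n$ is orthogonally diagonalizable, any polynomial $\varphi$ satisfies $\varphi(T_n)v_k = \varphi(\mu_k)v_k$ on the common eigenbasis $\{v_k\}$. Applying this to $\varphi(x) = sx^2 + qx + (p-2s)$ and $\mu_k = 2\cos(k\pi/(n+1))$ yields
\begin{equation*}
\alpha_{n,k} = s\mu_k^2 + q\mu_k + (p-2s) = 4s\cos^2\!\left(\frac{k\pi}{n+1}\right) + 2q\cos\!\left(\frac{k\pi}{n+1}\right) + p - 2s,
\end{equation*}
which is the announced formula.

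I expect the only delicate point to be the corner matching in the matrix identity: the interior entries of $sT_n^2 + qT_n + (p-2s)I$ reproduce $D_n$ for \emph{any} value of $r$, but the top-left and bottom-right entries of $T_n^2$ equal $1$ instead of $2$, and this deficit is exactly what forces the boundary condition $r = p-s$ for the representation to be valid. Everything else is routine spectral calculus.
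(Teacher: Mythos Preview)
Your argument is correct and complete. The matrix identity $D_n = sT_n^2 + qT_n + (p-2s)I_n$ holds exactly when $r = p-s$, for the reason you pinpoint (the corner entries of $T_n^2$ equal $1$ rather than $2$), and the spectrum then follows from the classical eigenvalues $\mu_k = 2\cos(k\pi/(n+1))$ of $T_n$ by polynomial functional calculus.

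The paper's own proof, however, is not an argument at all: it is a one-line citation, ``See theorem~4 in Solary \cite{solary20}.'' So your approach is genuinely different in that you supply a self-contained derivation where the paper defers entirely to an external reference. What your route buys is transparency: it makes immediately visible \emph{why} the condition $r = p-s$ is the natural one (it is precisely the condition under which $D_n$ lies in the commutative algebra generated by $T_n$), and it exhibits the eigenvalue formula as $\alpha_{n,k} = g(2\cos(k\pi/(n+1)))$ for the very polynomial $g(x) = sx^2 + qx + (p-2s)$ of \eqref{polyg} that governs Lemma~\ref{fasino}. The paper only makes this last connection explicit later, inside the proof of Corollary~\ref{nulleigenvalue}; your presentation surfaces it from the outset and unifies the two lemmas conceptually.
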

\begin{proof}
 See theorem 4 in Solary \cite{solary20}.
\end{proof}

Since we have explicitly the general representation for the eigenvalues of
$D_n$
in the special case when $r = p - s$, it is now easy to obtain the determinant of
such matrix. As a consequence from Lemmas \ref{fasino} and \ref{eigelema}, the
following corollary is of
extreme importance.

\begin{cor}\label{nulleigenvalue}
 Let $D_n$ be the matrix in \eqref{matrizdn} with $r = p-s$. Then, it follows
that
\begin{enumerate}
 \item  $D_n$ has a null eigenvalue if and only if
 \begin{equation*}
  4 s \cos^2\left(\frac{k \pi}{n+1}\right) +2q \cos\left(\frac{k
\pi}{n+1}\right) +p-2s = 0,
 \end{equation*}
 for some $k$ such that $1 \leq k \leq n$.
 \item A closed expression for the determinant of $D_n$ is
given by
 \begin{equation*}
  \det(D_n) = \prod_{k=1}^n \left(4 s \cos^2\left(\frac{k \pi}{n+1}\right) +2q
\cos\left(\frac{k \pi}{n+1}\right) +p-2s\right).
 \end{equation*}
 \item Consider
  \begin{equation*}
  \D_0 = \bigcup_{n \in \N} \left\{p,q,s\ \Big|\ 0 < s \,,\ p = 2 s
\left(1+2 \cos^2\left(\frac{k \pi}{n+1}\right)\right)\,,\ q = -4 s
\cos\left(\frac{k \pi}{n+1}\right),\ \mbox{for } k \in \N\right\}.
 \end{equation*}
If $p, q, s$ lie inside $\D_1 \cup \D_2 \cup \D_3 \cup
\D_4 \setminus \D_0$, then $D_n$ is positive definite, for all $n \in \N$.
\end{enumerate}
\end{cor}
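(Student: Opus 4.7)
The plan is to deduce all three parts directly from Lemma \ref{eigelema} and Lemma \ref{fasino}, exploiting the observation that the eigenvalue formula in Lemma \ref{eigelema} can be rewritten as $\alpha_{n,k} = g\bigl(2\cos(k\pi/(n+1))\bigr)$, where $g(x) = sx^2 + qx + (p-2s)$ is exactly the Fasino polynomial from \eqref{polyg}. With this identification, $\alpha_{n,k}$ becomes the evaluation of a familiar second-degree polynomial at a Chebyshev-type node, which makes both the vanishing condition and the determinant formula transparent.

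Part 1 is immediate: a square matrix is singular if and only if one of its eigenvalues vanishes, and Lemma \ref{eigelema} supplies the complete list of eigenvalues when $r = p-s$. Part 2 follows from the identity $\det(D_n) = \prod_{k=1}^n \alpha_{n,k}$, valid because the determinant equals the product of the eigenvalues counted with algebraic multiplicity; substituting the expression for $\alpha_{n,k}$ from Lemma \ref{eigelema} yields the desired closed product form.

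For Part 3, the plan is to combine Lemma \ref{fasino}, which already delivers non-negative definiteness on $\D_1 \cup \D_2 \cup \D_3 \cup \D_4$, with Part 1, so that it remains only to show that removing $\D_0$ kills every residual zero eigenvalue. I would proceed case by case using $\alpha_{n,k} = g\bigl(2\cos(k\pi/(n+1))\bigr)$ together with the strict inclusion $2\cos(k\pi/(n+1)) \in (-2,2)$, which holds because $k\pi/(n+1) \in (0,\pi)$. In $\D_1$ the roots $x_1, x_2$ of $g$ bracket $[-2,2]$, so $g$ is strictly positive on $(-2,2)$; in $\D_4$ both roots lie on the same side of $[-2,2]$, again leaving $g > 0$ on $(-2,2)$; in $\D_2$ the polynomial $g(x) = qx + p$ is linear, with its possible root reaching $[-2,2]$ only at the endpoints $\pm 2$ on the boundary $p = 2|q|$. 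Hence the Chebyshev nodes cannot coincide with a root of $g$ in $\D_1, \D_2, \D_4$.

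The only remaining possibility is $\D_3$, where $g$ admits a double real root $x^{\ast} = -q/(2s)$ precisely when $q^2 = 4s(p-2s)$, and this root may lie inside $(-2,2)$. Setting $x^{\ast} = 2\cos(k\pi/(n+1))$ and eliminating $q^2$ from the discriminant equation gives exactly the parametrization $q = -4s\cos(k\pi/(n+1))$ and $p = 2s\bigl(1 + 2\cos^2(k\pi/(n+1))\bigr)$ defining $\D_0$. Thus excluding $\D_0$ removes the unique obstruction to positive definiteness. I expect the main obstacle to be the careful bookkeeping in $\D_4$, where one must control both real roots of $g$ jointly and handle the boundary cases in which a root coincides with $\pm 2$; once this is in place, strict positivity of every $\alpha_{n,k}$ outside $\D_0$ is automatic.
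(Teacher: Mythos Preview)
Your proposal is correct and follows essentially the same route as the paper's proof: Parts 1 and 2 come straight from Lemma \ref{eigelema}, and Part 3 uses the identification $\alpha_{n,k}=g\bigl(2\cos(k\pi/(n+1))\bigr)$ together with the strict inclusion of the Chebyshev nodes in $(-2,2)$ to upgrade non-negativity to strict positivity on $\D_1\cup\D_2\cup\D_4$, leaving only the double-root locus $q^2=4s(p-2s)$ inside $\D_3$, which is precisely $\D_0$. The only difference is cosmetic: the paper treats $\D_1\cup\D_2\cup\D_4$ in one stroke (``in the worst scenario $g$ has a root at $x=\pm 2$''), whereas you spell out each region separately.
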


\begin{proof}
   By Lemma \ref{eigelema}, the eigenvalues of $D_n$ are given as $\alpha_{n,k}
= 4 s \cos^2\left(\frac{k \pi}{n+1}\right) +2q \cos\left(\frac{k
\pi}{n+1}\right) +p-2s$, for $1 \leq k \leq n$. Hence, statement 1 is
evident. For the proof of statement 2, we note that the determinant of $D_n$ is
equal to the product of its eigenvalues.

Statement 3 is the only one that requires
more caution. In the proof of Lemma \ref{fasino}, we note that inside $\D_1 \cup
\D_2 \cup \D_4$ we have $\alpha_{n,k} > 0$ for all $k, n \in
\N$. Indeed, if $p, q, s \in \D_1 \cup \D_2 \cup \D_4$, the polynomial
$g(\cdot)$, defined in \eqref{polyg}, is non-negative for all $x \in [-2,2]$
and, in the worst scenario, it has a real root at $x = -2$ or $x = 2$.
Since
$\alpha_{n,k} = g\left(2 \cos\left(\frac{k \pi}{n+1}\right)\right)$ and
$\big|\cos\left(\frac{k \pi}{n+1}\right)\big| < 1$, for all $k, n \in \N$,
it follows that $\alpha_{n,k} >
0$, for all $k, n \in \N$. The only
section that $D_n$ can actually have a null eigenvalue is inside
the domain $\D_3$ with $q^2 = 4 s (p-2s)$. In this
case, when $q^2 = 4 s (p-2s)$, it follows that $-q/2s$ is the only root of the
polynomial $g(\cdot)$ and, therefore,
\begin{equation*}
 \alpha_{n,k} = g\left(2 \cos\left(\frac{k \pi}{n+1}\right)\right) = 0 \
\Leftrightarrow \ q = -4 s \cos\left(\frac{k \pi}{n+1}\right).
\end{equation*}
As a solution to the equation $s\left(2 \cos\left(\frac{k
\pi}{n+1}\right)\right)^2 + q \left(2 \cos\left(\frac{k \pi}{n+1}\right)\right)
+ (p-2s) = 0$, we obtain
\begin{equation*}
 p = 2 s\left(1+ 2 \cos^2\left(\frac{k \pi}{n+1}\right)\right).
\end{equation*}
Therefore, the matrix $D_n$, with $r = p - s$, has an eigenvalue equal to zero
if and only if $s > 0$, $q = -4 s \cos\left(\frac{k \pi}{n+1}\right)$ and $ p =
2 s\left(1+ 2 \cos^2\left(\frac{k \pi}{n+1}\right)\right)$.
\end{proof}



\section{An explicit formula for the determinant of the matrix
$\bs{D_n}$}\label{SecDeterminante}

It is possible to find in the literature explicit formulas for the determinant
of pentadiagonal symmetric Toeplitz matrices (see e.g.\ Andeli\'{c} and da
Fonseca \cite{andelic20}, Elouafi \cite{elouafi11, elouafi13}, and Jia \etal
\cite{jia16}). However, little has been done concerning  pentadiagonal
symmetric matrices with perturbed corners. Recently, Solary \cite{solary20}
proposed a closed expression for the determinant and computational
properties for a pentadiagonal matrix disposed by blocks, where the corners in
the main diagonal are perturbed. This matrix by blocks serves as a
generalization of the matrix $D_n$ in \eqref{matrizdn} and its determinant can
be computed from equation (22) in Solary \cite{solary20}. The formula of the
determinant was given with the help of the Sherman-Morrison-Woodbury
formula.

In the present section, we show a closed expression for
the determinant of the matrices $D_n$ and $E_n$, defined in \eqref{matrizen}, by
considering a recursive relation
proposed in Sweet \cite{sweet69}. We also show the explicit expressions for some cases not covered by this author (see Lemmas 3.2 for matrices $D_n$ and $E_n$ and Lemma 3.3-3.5
for the matrix $E_n$). In Theorem 3.1 we exhibit a closed expression for the determinant of the matrix $D_n$, based on the results of Lemmas 3.1-3.5. As far as we know, this explicit expression is totally new and it provides a quicker and efficient way
to compute the determinant of $D_n$. To achieve such aim, we shall consider the sub-matrix
   \begin{equation}\label{matrizen}
    E_n = \left[%
    \begin{array}{cccccc}
     r & q & s & 0 & \cdots & 0 \\
     q & p & q & s & \ddots & \vdots  \\
     s & q & p & \ddots & \ddots & 0 \\
     0 & s & \ddots & \ddots & q & s \\
     \vdots & \ddots & \ddots & q & p & q\\
     0 & \cdots & 0 & s & q & p
     \end{array}\right].
    \end{equation}
Let us denote the determinants of $D_n$ and $E_n$ by $d_n$
and $e_n$, respectively. The recursive relation presented in Sweet
\cite{sweet69} gives us
the following lemma.

\begin{lemma}\label{recrela}
For $n \geq 6$ and $q \neq 0$, the following recursive relations hold
\begin{align}
 &d_{n} = (r - s)\,e_{n-1} + (p\,s - q^2)\,(e_{n-2}-s\,e_{n-3}) + s^3\,(s -
p)\,e_{n-4} + s^5\,e_{n-5}, \label{dn}\\[2mm]
 &e_{n} = (p - s)\,e_{n-1} + (p\,s - q^2)\,(e_{n-2} -s\,e_{n-3}) + s^3\,(s
- p)\,e_{n-4} + s^5\,e_{n-5},\label{en}
\end{align}
with the initial conditions
\begin{align}
 &e_1 = r, \nonumber\\[2mm]
 &e_2 = p\,r-q^2, \nonumber\\[2mm]
 &e_3 = p^2 r - q^2 (r - 2 s) - p\,(q^2 + s^2),\label{intcond} \\[2mm]
 &e_4 = p^3 r-p^2 \left(q^2+s^2\right)-p \left(2 q^2 (r-s)+r
s^2\right)+q^4+2 q^2 s (r-s)+s^4, \nonumber\\[2mm]
 &e_5 = p^4 r + q^4 (r - 4 s) + r s^4 + 2 q^2 s^2 (-r + s) - p^3 (q^2 +
s^2) + p (2 q^4 + 4 q^2 r s + s^4) \nonumber\\
  & \qquad + p^2 (-2 r s^2 + q^2 (-3 r + 2 s)). \nonumber
\end{align}
\end{lemma}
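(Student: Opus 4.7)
The plan is to establish the recurrence \eqref{en} for $e_n$ by following Sweet \cite{sweet69}, to deduce \eqref{dn} from it via a one‐line linearity argument, and then to verify the initial conditions in \eqref{intcond} by direct computation on small matrices.

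For \eqref{en}, I would expand $e_n$ by cofactors along the last row of $E_n$, whose only non-zero entries are $s$, $q$, $p$ in positions $n-2$, $n-1$, $n$. This produces an expression of the form $e_n = p\,e_{n-1} - q\,A_{n-1} + s\,B_{n-1}$, where $A_{n-1}$ and $B_{n-1}$ are $(n-1)\times(n-1)$ minors that are no longer pentadiagonal of the type of $E_{n-1}$. Expanding each of these minors, in turn, along its new last row and iterating once more generates a small closed linear system relating $e_{n-1}$, $e_{n-2}$, $e_{n-3}$, $e_{n-4}$, $e_{n-5}$ and the auxiliary quantities. The hypothesis $q\neq 0$ is used here to guarantee that this system can be solved for the auxiliary unknowns; eliminating them yields the five-term relation \eqref{en}. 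This is the principal obstacle of the proof, since it requires delicate bookkeeping of repeated cofactor expansions; however, the argument is essentially that of Sweet \cite{sweet69} and no new idea is needed. It is worth noting that the value $r$ in the top-left corner does not affect the recurrence itself, because the cofactor expansions take place at the opposite end of the matrix; $r$ only enters through the initial conditions.

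With \eqref{en} available, the recurrence \eqref{dn} follows immediately. The matrices $D_n$ and $E_n$ coincide everywhere except in their $(n,n)$ entries, which equal $r$ and $p$ respectively. By multilinearity of the determinant in the last column,
\begin{equation*}
 d_n \,=\, e_n \,+\, (r-p)\, e_{n-1},
\end{equation*}
since the cofactor of the $(n,n)$ position of $E_n$ is exactly $\det(E_{n-1}) = e_{n-1}$. Substituting \eqref{en} into this identity and combining the coefficients of $e_{n-1}$ via $(p-s)+(r-p) = r-s$ yields \eqref{dn} at once.

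Finally, I would verify the initial conditions in \eqref{intcond} by straightforward Laplace expansion on the explicit small matrices $E_1, \dots, E_5$. The case $n=1$ is just $E_1=[r]$; the case $n=2$ is a routine $2\times 2$ computation; the cases $n=3,4,5$ are purely algebraic cofactor expansions that are longer but entirely mechanical and can be double-checked symbolically. They present no conceptual difficulty and supply the five base values needed to initialize the recurrence at $n=6$.
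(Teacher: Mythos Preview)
Your proposal is correct and follows essentially the same route as the paper, which simply cites Sweet \cite{sweet69} for both recurrences. Your multilinearity observation $d_n = e_n + (r-p)\,e_{n-1}$ is a clean way to pass from \eqref{en} to \eqref{dn} and is consistent with what the paper invokes from Sweet; the remainder (Sweet's cofactor bookkeeping for \eqref{en} and the direct computation of $e_1,\dots,e_5$) matches the paper's approach exactly.
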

 \begin{proof}
 Immediate from equations (1), (5) and (11) in Sweet \cite{sweet69}.
\end{proof}

\medskip

\begin{remark}
 The initial conditions $e_1, e_2, e_3, e_4, e_5$ in \eqref{intcond} are
defined as the first, second, third, fourth and fifth principal minor of $E_n$,
respectively.
\end{remark}

The case when $q = 0$ is not covered by Sweet's \cite{sweet69} recurrence
relations, but it is not difficult to prove the following.

\begin{lemma}\label{lemaq0}
 For $n \geq 5$ and $q = 0$, the following recursive relations hold
\begin{align}
 &d_{n} = r\,e_{n-1} - p\,s^2\,e_{n-3} + s^4\,e_{n-4}, \label{qdn}\\
 &e_{n} = p\,e_{n-1} - p\,s^2\,e_{n-3} + s^4\,e_{n-4},\label{qen}
\end{align}
with the initial conditions
\begin{align*}
 e_1 = r, \qquad
 e_2 = p\,r, \qquad
 e_3 = p\,(p\,r - s^2), \qquad
 e_4 = (p^2-s^2)(p\,r - s^2).
\end{align*}
\end{lemma}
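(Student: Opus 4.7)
The plan is to exploit the fact that when $q=0$, the off-diagonal entries of $E_n$ and $D_n$ lie only on the $\pm 2$ diagonals, so the $(i,j)$-entry vanishes whenever $i$ and $j$ have different parities. Consequently, the symmetric permutation that reorders rows and columns as (odd indices first, then even indices) turns both matrices into block-diagonal matrices whose two blocks are tridiagonal, of sizes $\lceil n/2\rceil$ and $\lfloor n/2\rfloor$. Introduce $\tau_m$ for the determinant of the $m\times m$ tridiagonal matrix with $p$ on the main diagonal and $s$ on the sub/super-diagonals, and $\sigma_m$ for the same matrix with the $(1,1)$ entry replaced by $r$. Both satisfy the three-term recurrence $x_m = p\,x_{m-1} - s^2 x_{m-2}$, with $\tau_0=1,\tau_1=p$ and $\sigma_0=1,\sigma_1=r$; a short Laplace expansion also yields the identity $\sigma_m = r\,\tau_{m-1} - s^2\,\tau_{m-2}$.

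From the block decomposition one reads off $e_n = \sigma_{\lceil n/2\rceil}\,\tau_{\lfloor n/2\rfloor}$. Similarly, because $D_n$ carries an $r$ at both corners, the analogous factorization gives $d_{2k} = \sigma_k^2$ and $d_{2k+1} = \delta_{k+1}\,\tau_k$, where $\delta_m := r\,\sigma_{m-1} - s^2\,\sigma_{m-2}$ is the determinant of the symmetric tridiagonal block that carries $r$ at both endpoints. The initial conditions $e_1,\ldots,e_4$ then follow by direct substitution into these product formulas (or, equivalently, by expansion of the small matrices).

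To verify \eqref{qen} I split into the cases $n=2k$ and $n=2k+1$; in each one, applying the three-term recurrence to either $\tau$ or $\sigma$ collapses $p\,e_{n-1}-ps^2 e_{n-3}+s^4 e_{n-4}$ to the desired product. For instance, when $n=2k+1$,
\begin{equation*}
p\,e_{n-1} - p s^2\,e_{n-3} + s^4\,e_{n-4} = p\,\sigma_k \tau_k - s^2\sigma_{k-1}\bigl(p\,\tau_{k-1} - s^2\tau_{k-2}\bigr) = \tau_k\bigl(p\,\sigma_k - s^2 \sigma_{k-1}\bigr) = \sigma_{k+1}\tau_k = e_n.
\end{equation*}
The case $n=2k$ and the two cases for \eqref{qdn} are completely analogous; for $d_n$, the identity $\sigma_m = r\,\tau_{m-1}-s^2\tau_{m-2}$ is precisely what reproduces the leading coefficient $r$ coming from the $(n,n)$-entry of $D_n$.

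The main obstacle is essentially the bookkeeping around parity: the sizes of the two tridiagonal blocks, and the position of the second $r$ inside them for $D_n$, depend on whether $n$ is even or odd, so the recursion has to be verified in two separate subcases in each of \eqref{qdn}--\eqref{qen}. Once the block decomposition is set up, however, each subcase reduces to a one-line manipulation using the standard three-term recurrence for tridiagonal determinants.
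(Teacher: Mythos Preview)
Your proposal is correct and takes a genuinely different route from the paper. The paper dispatches the lemma in one line (``The proof follows by the induction principle''), presumably via a direct Laplace expansion along the last row/column. Your argument instead exploits the parity structure that appears when $q=0$: the odd--even permutation block-diagonalizes $E_n$ and $D_n$ into two tridiagonal matrices, giving the explicit factorizations $e_n=\sigma_{\lceil n/2\rceil}\tau_{\lfloor n/2\rfloor}$, $d_{2k}=\sigma_k^2$, $d_{2k+1}=\delta_{k+1}\tau_k$. The recursions \eqref{qdn}--\eqref{qen} then drop out of the standard three-term tridiagonal recurrence, together with the expansion $\sigma_m=r\,\tau_{m-1}-s^2\tau_{m-2}$. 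This is more structural than the paper's bare induction: it actually \emph{explains} the shape of the recurrence and, as a byproduct, yields closed product formulas for $e_n$ and $d_n$ in the $q=0$ case that the paper never states. The cost is the parity bookkeeping you already flagged---four subcases instead of one---but each subcase is indeed a one-line application of $x_m=p\,x_{m-1}-s^2 x_{m-2}$, so the trade-off is favorable.
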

\begin{proof}
 The proof follows by the induction principle.
\end{proof}

From \eqref{en}, we obtain the following lemma.

\begin{lemma}\label{l22}
 If $q \neq 0$ and $s \neq 0$, then $e_n = \det(E_n)$ may be given by
 \begin{equation}
  e_n = \sum_{j=1}^5 \kappa_j \, \mu_j^n, \label{enclosed1}
 \end{equation}
 where $\mu_1, \cdots, \mu_5$ are given in \eqref{roots}. The coefficients
$\kappa_1, \cdots, \kappa_5$ are described in the following way:
 \begin{enumerate}
  \item if $q^2 \notin \left\{4s (p-2s),\, (p+2s)^2/4\right\}$, then
it holds \eqref{k15};
  \item if $q^2 = 4s (p-2s)$ and $p > 6s$, then it holds \eqref{p>6s};
  \item if $q^2 = 4s (p-2s)$ and $p < 6s$, then it holds \eqref{p<6s};
  \item if $q^2 = (p+2s)^2/4$ and $p \neq 6s$, then it holds \eqref{k15a};
  \item if $q^2 \in \left\{4s(p-2s),\, (p+2s)^2/4 \right\}$ and $p = 6s$, then
it holds \eqref{k15b}.
 \end{enumerate}
\end{lemma}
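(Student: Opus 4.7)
The plan is to treat \eqref{en} as an order-five homogeneous linear recurrence with constant coefficients (valid since $s \neq 0$ makes $s^5$ a genuine leading coefficient of the characteristic polynomial) and invoke the standard theory: $e_n$ is a linear combination of the terms $\mu^n$ over the roots $\mu$ of the characteristic polynomial, with polynomial-in-$n$ multipliers appearing at repeated roots. The characteristic polynomial reads
\[
\chi(\mu) = \mu^5 - (p-s)\mu^4 - (ps-q^2)\mu^3 + s(ps-q^2)\mu^2 + s^3(p-s)\mu - s^5,
\]
and a direct substitution shows $\chi(s) = 0$, so $(\mu - s)$ divides $\chi$. Synthetic division produces a quartic cofactor which, after the scaling $u = \mu/s$, is palindromic: $u^4 + (2-p/s)u^3 + (2 - 2p/s + q^2/s^2)u^2 + (2-p/s)u + 1$. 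Applying the classical palindromic substitution $v = u + 1/u$ reduces it to the quadratic $v^2 + (2 - p/s)v + (q^2/s^2 - 2p/s) = 0$, whose discriminant simplifies to $\bigl((p+2s)^2 - 4q^2\bigr)/s^2$. Solving this quadratic, then solving $u^2 - vu + 1 = 0$ for each $v$, and finally setting $\mu = su$, yields the five roots $\mu_1, \ldots, \mu_5$ claimed in \eqref{roots}.

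Next I would carry out the case analysis based on how these five roots coalesce. The $v$-discriminant vanishes precisely when $q^2 = (p+2s)^2/4$, which promotes each of the two $u$-roots to multiplicity two in the quartic cofactor (case 4). Independently, $v = 2$ is a root of the quadratic in $v$ if and only if $q^2 = 4s(p-2s)$; since $v = 2$ forces $u = 1$, this promotes $\mu = s$ to multiplicity three in $\chi$, while the other $v$-root is $p/s - 4$ by Vieta (cases 2 and 3, distinguished by the sign of $p - 6s$, which governs the sign conventions in the explicit formulas for the remaining $\mu_j$'s). The two degeneracies coincide exactly at $p = 6s$, where $\chi(\mu) = (\mu - s)^5$ (case 5). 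Outside all four loci, the five roots of $\chi$ are simple and the ansatz $e_n = \sum_{j=1}^5 \kappa_j \mu_j^n$ applies literally (case 1); inside each degenerate locus, the required polynomial-in-$n$ corrections are absorbed into the $\kappa_j$.

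Finally, in each of the five cases one plugs the appropriate ansatz into the initial conditions \eqref{intcond} to obtain a $5 \times 5$ (possibly confluent) Vandermonde linear system, which is nonsingular by construction, and whose inversion delivers the formulas \eqref{k15}--\eqref{k15b}. The main obstacle is precisely this last step: even in the generic case 1 the Vandermonde inversion produces bulky rational expressions in $p, q, r, s$ that must be simplified into the stated compact form, and the confluent cases 2--5 require exploiting the identities among the $\mu_j$ forced by the degeneracy (e.g.\ $\mu_4 \mu_5 = s^2$ when $q^2 = 4s(p-2s)$) to coax out a clean closed form; everything else is a mechanical application of the structure theory for constant-coefficient recurrences.
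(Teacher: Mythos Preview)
Your proposal is correct and follows essentially the same route as the paper: both treat \eqref{en} as a constant-coefficient linear recurrence, pass to the characteristic polynomial, classify cases by root multiplicity, and then solve a (confluent) Vandermonde system against the initial conditions \eqref{intcond}. The one place you go further is in actually \emph{deriving} the roots \eqref{roots} via the palindromic structure of the quartic cofactor---the paper simply states the roots and checks the case splits by computing $\alpha,\beta_1,\beta_2$ directly, whereas your substitution $v=u+1/u$ makes transparent why the two critical loci are $q^2=(p+2s)^2/4$ (vanishing $v$-discriminant) and $q^2=4s(p-2s)$ (the condition $v=2$, forcing $u=1$ and hence $\mu=s$ with multiplicity three). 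For the final Vandermonde inversion the paper explicitly resorts to Wolfram Mathematica, so your assessment of that step as the main mechanical obstacle is on the mark.
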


\begin{proof}
  The result follows by applying the characteristic roots technique to the
associated auxiliary polynomial
  \begin{equation*}
   \rho(z) = z^5 - (p-s)\,z^4 - (p\,s - q^2)\,(z^3 - s\,z^2) - s^3\,(s -
p)\,z - s^5, \qquad \mbox{for } z \in \mathbb{C}.
  \end{equation*}
  The roots of $\rho(\cdot)$ are given by
  \begin{equation}\label{roots}
   \begin{split}
   &\mu_1 = \frac{p - 2s - \alpha - \beta_1}{4},\qquad
    \mu_2 = \frac{p - 2s - \alpha + \beta_1}{4}, \qquad
    \mu_3 = \frac{p - 2s + \alpha - \beta_2}{4}, \\
   &\hspace{3cm}
    \mu_4 = \frac{p - 2s + \alpha + \beta_2}{4} \qquad \mbox{and} \qquad
    \mu_5 = s,
   \end{split}
  \end{equation}
  with
  \begin{align}
   \alpha = \sqrt{(p+2s)^2 - 4 q^2},\ \
   \beta_1 = \sqrt{2(p-2s)(p+2s-\alpha)-4q^2} \ \ \mbox{and} \ \
   \beta_2 = \sqrt{2(p-2s)(p+2s+\alpha)-4q^2}. \nonumber
  \end{align}
 Let us separate the proof in four cases.

 \noindent {\it Case 1:} if $q^2 \notin \left\{4 s\,(p-2s),\,
(p+2s)^2/4\right\}$, then $\alpha$, $\beta_1$ and $\beta_2$ are non-zero, and as
a consequence, $\mu_1, \cdots, \mu_5$ are distinct roots of the polynomial
$\rho(\cdot)$. Thus, each solution to the recurrence in \eqref{en} is of the
form \eqref{enclosed1}, where the coefficients $\kappa_j$, for $j =
1,\cdots,5$, are
the solution to
the 5-by-5 Vandermonde linear system
  \begin{align*}
   \left[%
    \begin{array}{ccccc}
     1 & 1 & 1 & 1 & 1 \\
     \mu_1   & \mu_2   & \mu_3   & \mu_4   & \mu_5 \\[1mm]
     \mu_1^2 & \mu_2^2 & \mu_3^2 & \mu_4^2 & \mu_5^2 \\[1mm]
     \mu_1^3 & \mu_2^3 & \mu_3^3 & \mu_4^3 & \mu_5^3 \\[1mm]
     \mu_1^4 & \mu_2^4 & \mu_3^4 & \mu_4^4 & \mu_5^4
    \end{array}\right]
    \left[%
    \begin{array}{c}
     \kappa_1'\\[0.8mm]
     \kappa_2'\\[0.8mm]
     \kappa_3'\\[0.8mm]
     \kappa_4'\\[0.8mm]
     \kappa_5'
    \end{array}\right] =
    \left[%
    \begin{array}{c}
     e_1\\[0.8mm]
     e_2\\[0.8mm]
     e_3\\[0.8mm]
     e_4\\[0.8mm]
     e_5
    \end{array}\right]
  \end{align*}
with $\kappa_j' = \kappa_j\,\mu_j$ and $e_j$ representing the initial
conditions given in \eqref{intcond}, for $j = 1, \cdots, 5$. We used the
Wolfram Mathematica software (version 11.2) to find these coefficients,
obtaining the expressions:
\begin{align}
 \kappa_1 = \K&(-\alpha, \beta_2, -\beta_1), \qquad
 \kappa_2 = \K(-\alpha, \beta_2,  \beta_1),  \qquad
 \kappa_3 = \K( \alpha, \beta_1, -\beta_2),  \nonumber \\
&\kappa_4 = \K( \alpha, \beta_1,  \beta_2) \qquad  \mbox{and} \qquad
 \kappa_5 = \frac{2 s\,(r+s-p)}{q^2-4 s (p-2 s)}, \label{k15}
\end{align}
where
\begin{equation}\label{Kxyz}
 \K(x,y,z) = \frac{ 64
 \begin{pmatrix}
      2 s^4 (2 s + 3p + x + z) + p s^2 (4 q^2 - 2 s (p-x) -(p+x)(2p+z)) \\
       -\,2 s q^2 (4q^2-2p^2 - (p-2s)(2x+z)- x\,z)\\
  +\,r  \begin{pmatrix}
       4 q^4 - q^2 (p+x) (2p+z) +2 s q^2 (p - 2s+ 3x +2z)\\
       - 2 s^2 p\,(p+x+z) - 2 s^3 (p-2s-x+z)
      \end{pmatrix} \\
  +\,(q^2 - p r) \begin{pmatrix}
                2 s^2 (2s + 3 p + x) + 2 q^2 (3p - 4s + x + z)\\
                - (s\,  (p-x) + p\,(p+x))(2 p + z)
               \end{pmatrix}
 \end{pmatrix}}{z\,(p-2s+x+z)\,(p-6s+x+z)\,((2x+z)^2-y^2)},
\end{equation}
for $x, y, z \in \C$. We note that the coefficients $\kappa_1, \cdots,
\kappa_5$ in \eqref{k15}-\eqref{Kxyz} are not well defined when $q^2 \in
\left\{4 p\,(p-2s),\, (p+2s)^2/4\right\}$. In these cases, some of the roots
$\mu_1, \cdots, \mu_5$ have multiplicity greater than 1. Thus, the solution to
the recurrence in \eqref{en} takes another form and the coefficients might
depend on $n$.

\noindent {\it Case 2:} if $q^2 = 4 s\,(p-2s)$, then $\alpha = |p -
6s|$. Let us consider $\gamma = \sqrt{(p-6 s) (p-2 s)}$. On the one hand, if $p
> 6s$, we get $\beta_1 = 0$ and $\beta_2 = 2 \gamma$, implying that $\mu_1 =
\mu_2 = \mu_5 = s$, $\mu_3 = (p-4s-\gamma)/2$ and $\mu_4 = (p-4s+\gamma)/2$. It
follows that \eqref{enclosed1} is a
solution to the recurrence in \eqref{en}, with
\begin{equation}\label{p>6s}
 \kappa_1 = \K_1, \quad \kappa_2 = 2\,n\ \K_2(2), \quad \kappa_3 =
\K_3(\gamma), \quad \kappa_4 = \K_3(-\gamma) \quad \mbox{and} \quad
\kappa_5 = n^2 \ \K_2(1),
\end{equation}
 where
\begin{equation} \label{K12}
  \K_1 = \frac{p^2-p\,(r+8 s)+2s\,(2 r+11 s)}{(p-6 s)^2}, \qquad
  \K_2(j) = \frac{p-r- j\,s}{p-6 s}, \quad \mbox{for } j = 1,2,
\end{equation}
and
\begin{equation} \label{K3}
\K_3(z) = \frac{2\,s^2\,(p-2s)^2\,(p-4s+z) \left((r-p) (p-4
s + z)+2 s^2\right)}{z\,(4s\,(3s-z)+ p\,(p-8 s+z))^3}, \quad \mbox{for } z \in
\C.
\end{equation}
Note that $\kappa_2$ and $\kappa_5$ are dependent on $n$ and $n^2$,
respectively. On the other hand, if $p < 6s$, we get
$\beta_1 = 2 \gamma$ and $\beta_2 = 0$, implying that $\mu_1 =
(p-4s-\gamma)/2$, $\mu_2 = (p-4s+\gamma)/2$ and
$\mu_3 = \mu_4 = \mu_5 = s$. Then, it follows that \eqref{enclosed1} is a
solution to the recurrence in \eqref{en}, with
\begin{equation}\label{p<6s}
 \kappa_1 = \K_3(\gamma), \quad \kappa_2 = \K_3(-\gamma), \quad
\kappa_3 = \K_1,  \quad \kappa_4 = 2\,n\ \K_2(2) \quad \mbox{and} \quad
\kappa_5 = n^2\ \K_2(1),
\end{equation}
for $\K_1$ and $\K_2(\cdot)$ defined in \eqref{K12} and $\K_3(\cdot)$ defined
in \eqref{K3}. Note that in this case, $\kappa_4$ and $\kappa_5$ are dependent
on $n$ and $n^2$, respectively.

\noindent {\it Case 3:} if $q^2 = (p+2s)^2/4$ and $p \neq 6s$, let us denote
$\delta = \sqrt{(p-6s)(p+2s)}$. Then $\alpha = 0$ and $\beta_1 = \beta_2 =
\delta$, implying that $\mu_1 = \mu_3 = (p-2s-\delta)/4$ and $\mu_2 = \mu_4 =
(p-2s+\delta)/4$, with $\mu_5 = s \neq \mu_j$, for $j = 1, 2, 3, 4$.
The solution of the recurrence in \eqref{en} is given in this case by
\eqref{enclosed1} with
\begin{align}
 \kappa_1 = \K_4(\delta), \quad \kappa_2 = \K_4(-\delta),
\quad \kappa_3 =  n\ \K_5(\delta), \quad \kappa_4 = n \
\K_5(-\delta) \quad \mbox{and} \quad \kappa_5 =
 \frac{8\,s\,(r+s-p)}{(p-6\,s)^2}, \label{k15a}
\end{align}
where
\begin{align*}
&\K_4 (z) = \frac{8 \begin{pmatrix}
p^5\,(12s - p + z) - 2\,p^4 s\, (24 s -4 r + 5 z) - 4\,p^3 s\,(2 r
(10s + z) - s (16s + 9z))\\
+\,8\,p^2 s^2\,(4r (5s + 2z) + s (20s - 7z)) + 16\,p s^3\,(2 r
(8s - 3z) - 5s (8s + z))\\
- 32\,s^4\,(2\,r\,(6s + z) + s\,(6s - 7z))
\end{pmatrix}}{z\,(p-6 s)\,(p - 2s - z)^2\,(p-6s-z)^2}
\end{align*}
and
\begin{align*}
&\K_5(z) = \frac{4\,
\begin{pmatrix}
p^4\,(2r + 4s - p + z) - 2\,p^3\,(r (6 s + z) - s (6 s - z)) -
8\,p^2 s\,(r (s - z) + s (s + z))\\
+\,8\,p\,s^2\, (r (8 s + z) - s (6 s + z)) - 16\,s^3\,\left(4 s^2 - r
(2 s - z)\right)\end{pmatrix}}{z^2\,(p - 6s - z)\,(p-2
s - z)^2},
\end{align*}
for $z \in \C$. Note that $\kappa_3$ and $\kappa_4$ are both dependent on $n$.

\noindent {\it Case 4:} if $q \in \left\{4 s\,(p-2s),\ (p+2s)^2/4\right\}$ and
$p = 6s$, then $\mu_1 = \cdots = \mu_5 = s$ and the solution of the recurrence in
\eqref{en} is given by \eqref{enclosed1} with
\begin{equation}\label{k15b}
 \kappa_1 = 1, \ \kappa_2 = n \left(\frac{r+8 s}{6 s}\right), \
\kappa_3 = n^2 \left(\frac{5 r-7s}{12s}\right), \ \kappa_4 =
n^3 \left(\frac{r-4 s}{3 s}\right) \ \mbox{and} \ \kappa_5 =
n^4 \left(\frac{r-5 s}{12 s}\right).
\end{equation}
Note that $\kappa_j$ depends on $n^{j-1}$, for $j = 2, 3, 4, 5$.
\end{proof}

An analogous result follows when $q = 0$.

\begin{lemma}\label{lq0}
 If $q = 0$ and $s \neq 0$, then $e_n = \det(E_n)$ may be given by
 \begin{equation} \label{enclosed2}
  e_n = \sum_{j=1}^4 \kappa_j \, \nu_j^n,
 \end{equation}
 where
  \begin{equation*}
   \nu_1 = -s, \quad \nu_2 = s, \quad
   \nu_3 = \frac{1}{2} \left(p-\sqrt{p^2-4 s^2}\right) \quad \mbox{and} \quad
\nu_4 = \frac{1}{2} \left(p+\sqrt{p^2-4 s^2}\right).
  \end{equation*}
 The coefficients $\kappa_1, \cdots, \kappa_4$ are described in the following
 way:
\begin{enumerate}
 \item if $p^2 \neq 4s^2$, then $\kappa_1 = \K_6(1)$, $\kappa_2 =
\K_6(-1)$, $\kappa_3 = \K_7(1)$ and $\kappa_4 = \K_7(-1)$, where
 \begin{align*}
 \K_6(j) = \frac{p-r+j\,s}{2 (p +2\,j\,s)} \quad \mbox{and} \quad
 \K_7(j) = \frac{p^3 r -p^2 s^2-3\,p\,r s^2+2 s^4 + j \left(p s^2 + r s^2
-p^2 r\right) \sqrt{p^2-4 s^2} }{\left(p^2-4 s^2\right) \left(p^2-2 s^2 -j\,p\,
\sqrt{p^2-4 s^2}\right)},
\end{align*}
for $j = -1,1$;
 \item if $p = 2s$, then
 \begin{align*}
 \kappa_1 = \frac{3 s-r}{8 s},\quad \kappa_2 = \frac{r+5 s}{8 s},\quad \kappa_3
= n \left(\frac{r}{2 s}\right) \quad \mbox{and} \quad \kappa_4 = n^2
\left(\frac{r-s}{4 s}\right);
\end{align*}
 \item if $p = -2s$, then
 \begin{align*}
 \kappa_1 = \frac{5 s-r}{8 s},\quad \kappa_2 = \frac{r+3 s}{8 s}, \quad
\kappa_3 = -n\left(\frac{r}{2 s}\right) \quad \mbox{and} \quad
\kappa_4 = - n^2 \left(\frac{r+s}{4 s}\right).
\end{align*}
\end{enumerate}
Note that, if $p = \pm 2s$, then $\kappa_3$ and $\kappa_4$ depend on $n$ and
$n^2$, respectively.
\end{lemma}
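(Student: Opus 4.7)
The plan is to apply the standard characteristic-root technique to the homogeneous linear recurrence \eqref{qen}, exactly as in the proof of Lemma \ref{l22}, but now with a smaller $4 \times 4$ auxiliary polynomial. First I would write
\begin{equation*}
\rho(z) = z^4 - p\,z^3 + p\,s^2\,z - s^4
\end{equation*}
and observe that it factors as $(z^2-s^2)(z^2 - p\,z + s^2)$. This identifies its four roots as $\nu_1 = -s$, $\nu_2 = s$, and $\nu_3,\nu_4 = (p \mp \sqrt{p^2-4s^2})/2$, exactly those appearing in the statement. The next step is to identify when roots coincide: since $\nu_1 \nu_2 = -s^2 \neq 0$ and $\nu_3 \nu_4 = s^2$, equality between a root of the first factor and one of the second forces either $p = 2s$ (then $\nu_2 = \nu_3 = \nu_4 = s$, so $s$ is a triple root) or $p = -2s$ (then $\nu_1 = \nu_3 = \nu_4 = -s$). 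Outside of these two exceptional cases, all four roots are distinct.

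For Case~1 ($p^2 \neq 4s^2$), the general solution of \eqref{qen} has the form $e_n = \sum_{j=1}^4 \kappa_j\,\nu_j^n$. Evaluating at $n = 1,2,3,4$ and plugging in the initial conditions from Lemma \ref{lemaq0} yields a $4 \times 4$ Vandermonde linear system in the unknowns $\kappa_1,\dots,\kappa_4$. Inverting this system symbolically (I would delegate the algebra to \emph{Wolfram Mathematica}, as was done in the proof of Lemma \ref{l22}) should reproduce the rational functions $\K_6(\pm 1)$ and $\K_7(\pm 1)$ displayed in the lemma. A quick consistency check is that $\K_6(j)$ has $p + 2js$ in its denominator, which is exactly the factor that vanishes in the degenerate cases $p = \mp 2s$.

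For Cases~2 and~3, the multiplicity-three root at $\pm s$ forces the general solution of \eqref{qen} to be modified to
\begin{equation*}
e_n = A\,\nu^n + B\,(-\nu)^n + C\,n\,\nu^n + D\,n^2\,\nu^n,
\end{equation*}
with $\nu = s$ when $p = 2s$ and $\nu = -s$ when $p = -2s$. Substituting the four initial conditions produces a $4 \times 4$ linear system (no longer Vandermonde, but of confluent type) whose unique solution should give the explicit coefficients stated. The presence of the prefactors $n$ and $n^2$ in $\kappa_3$ and $\kappa_4$ is precisely the signature of the triple root, in complete analogy with Cases~2--4 of Lemma \ref{l22}.

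The main obstacle is purely bookkeeping: verifying that the symbolic inversion in Case~1 collapses to the compact rational expressions $\K_6$ and $\K_7$, and that the sign conventions in Cases~2 and~3 are consistent with the limits $p \to \pm 2s$ of the Case~1 formulas (a confluent limit that should send the split contributions of $\nu_3^n$ and $\nu_4^n$ into the $n\,\nu^n$ and $n^2\,\nu^n$ terms). Since $e_1, e_2, e_3, e_4$ are extremely simple when $q = 0$, each of the three computations is direct and essentially mechanical.
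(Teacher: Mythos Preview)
Your proposal is correct and follows exactly the approach the paper intends: the paper's own proof is the single sentence ``The proof is similar to the one of Lemma \ref{l22},'' and what you have written is precisely that characteristic-root argument specialized to the fourth-order recurrence \eqref{qen}, including the factorization $\rho(z)=(z^2-s^2)(z^2-pz+s^2)$, the identification of the degenerate cases $p=\pm 2s$ as triple roots, and the Vandermonde/confluent linear systems for the coefficients.
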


\begin{proof}
The proof is similar to the one of Lemma \ref{l22}.
\end{proof}

In the case when $s = 0$ we get the following lemma.

\begin{lemma}\label{lemas0}
 If $s = 0$ then $E_n$ in \eqref{matrizen} is a tridiagonal matrix with
determinant equal to
\begin{equation}\label{enclosed3}
 e_n =
 \begin{cases}
 \kappa_1 \xi_1^n + \kappa_2 \xi_2^n,& \mbox{if } q \neq 0,\\
 r\,p^{n-1},& \mbox{if } q = 0,
       \end{cases}
\end{equation}
where $\xi_1$ and $\xi_2$ are given by \eqref{z12}. The coefficients $\kappa_1,
\kappa_2$ are described in the following way:
\begin{enumerate}
 \item if $p^2 \neq 4 q^2$, then it holds \eqref{k12a};
 \item if $p^2 = 4 q^2$, then it holds \eqref{k12b}.
\end{enumerate}
\end{lemma}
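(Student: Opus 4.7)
My starting observation is that, when $s=0$, the matrix $E_n$ collapses to the tridiagonal matrix with main diagonal $(r,p,p,\ldots,p)$ and sub/super-diagonals identically equal to $q$. The case $q=0$ is then immediate: $E_n$ becomes diagonal with entries $r,p,p,\ldots,p$, so $e_n = r\,p^{n-1}$.

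For $q \neq 0$ my first step is to set up a two-term linear recurrence for $e_n$. Expanding $\det(E_n)$ along the first row yields
\begin{equation*}
 e_n = r\,t_{n-1} - q^2\,t_{n-2},
\end{equation*}
where $t_k$ denotes the $k$-th principal minor of the unperturbed symmetric tridiagonal Toeplitz matrix (diagonal $p$, off-diagonals $q$). The classical expansion gives $t_k = p\,t_{k-1} - q^2\,t_{k-2}$, and substituting this relation back into the three expressions $e_n$, $e_{n-1}$, $e_{n-2}$ shows that $e_n$ inherits the same recurrence,
\begin{equation*}
 e_n = p\,e_{n-1} - q^2\,e_{n-2}, \qquad n \geq 3,
\end{equation*}
now with initial conditions $e_1 = r$ and $e_2 = p\,r - q^2$.

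I would then apply the characteristic-roots technique, exactly as in the proof of Lemma \ref{l22}. The associated auxiliary polynomial is $x^2 - p\,x + q^2$, whose roots are precisely the $\xi_1, \xi_2$ displayed in \eqref{z12}, and by Vieta's formulas $\xi_1 + \xi_2 = p$ and $\xi_1 \xi_2 = q^2$. In Case 1, when $p^2 \neq 4q^2$, the two roots are distinct, the general solution is $e_n = \kappa_1\,\xi_1^n + \kappa_2\,\xi_2^n$, and imposing the two initial conditions reduces to a $2\times 2$ Vandermonde system whose solution, after a short simplification using the Vieta relations above, matches \eqref{k12a}. In Case 2, when $p^2 = 4q^2$, there is a double root $\xi = p/2$; the general solution becomes $(A + B\,n)\xi^n$, and solving for $A,B$ from $e_1 = r$ and $e_2 = p\,r - p^2/4$ produces an expression that, once rewritten as $\kappa_1\,\xi_1^n + \kappa_2\,\xi_2^n$ with $\xi_1 = \xi_2 = p/2$, yields \eqref{k12b} (the dependence on $n$ being absorbed into the $\kappa_j$'s).

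The only real obstacle is cosmetic: matching the closed-form of the Vandermonde solution in Case 1 with the explicit shape of $\kappa_1, \kappa_2$ displayed in \eqref{k12a}. This is a routine algebraic simplification (rationalising the denominators $\xi_1 - \xi_2 = \pm\sqrt{p^2-4q^2}$ and using $\xi_1\xi_2 = q^2$), and no new idea beyond the two-term recurrence and the characteristic-roots method is needed. This is why, as in Lemma \ref{lq0}, the proof can be dispatched by a brief appeal to the argument of Lemma \ref{l22}.
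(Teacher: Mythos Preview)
Your proposal is correct and follows essentially the same route as the paper: establish the two-term recurrence $e_n = p\,e_{n-1} - q^2\,e_{n-2}$ and then apply the characteristic-roots technique, splitting into the distinct-roots and double-root cases. The only cosmetic difference is that you derive the recurrence directly via cofactor expansion, whereas the paper obtains it by specialising the five-term recurrence \eqref{en} of Lemma~\ref{recrela} to $s=0$.
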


\begin{proof}
 If $s = 0$ then \eqref{en} simplifies to
 \begin{equation}\label{reqs0}
  e_n = p\,e_{n-1} - q^2\,e_{n-2}.
 \end{equation}
In the case when $q = 0$, it follows that $E_n$ is a diagonal matrix with
determinant equal to $\det(E_n) = r\,p^{n-1}$. Whereas if $s \neq
0$, consider
 \begin{equation}\label{z12}
  \xi_1 = \frac{p - \sqrt{p^2 - 4 q^2}}{2} \quad \mbox{and} \quad \xi_2 =
\frac{p + \sqrt{p^2 - 4 q^2}}{2}.
 \end{equation}
 The solutions to the recurrence relation in \eqref{reqs0} are thus given by
 \begin{equation*}
  e_n = \kappa_1 \xi_1^n + \kappa_2 \xi_2^n,
 \end{equation*}
 with
 \begin{equation}\label{k12a}
  \kappa_1 =\frac{p-2r + \sqrt{p^2-4q^2}}{2 \sqrt{p^2-4q^2}} \quad \mbox{and}
\quad \kappa_2 = \frac{2r - p + \sqrt{p^2-4q^2}}{2 \sqrt{p^2-4q^2}}, \quad
\mbox{if} \ p^2 \neq 4 q ^2,
 \end{equation}
 and
 \begin{equation}\label{k12b}
  \kappa_1 = 1 \quad \mbox{and} \quad \kappa_2 = n\left(\frac{2r -
p}{p}\right), \quad \mbox{if} \ p^2 = 4 q^2.
 \end{equation}
\end{proof}

By inserting  the formulas in \eqref{enclosed1} and
\eqref{enclosed3} into the recurrence relation \eqref{dn} and the
formula \eqref{enclosed2} into the recurrence relation \eqref{qdn}, we obtain
an explicit formula for the determinant of $D_n$.

\begin{thm}\label{thedet}
 The determinant of the matrix $D_n$ in \eqref{matrizdn} is given by
 \begin{align}\label{detDn}
  \det D_n =
  \begin{cases}
   \sum_{j=1}^5 \kappa_j \, f(\mu_j) \, \mu_j^{n-5}, & \mbox{if } q \neq 0,\ s
\neq 0 \mbox{ and } n \geq 6,\\
   \sum_{j=1}^4 \kappa_j \, f(\nu_j) \, \nu_j^{n-4}, & \mbox{if } q = 0,\ s
\neq 0 \mbox{ and } n \geq 5,\\
   \sum_{j=1}^2 \kappa_j \, f(\xi_j) \, \xi_j^{n-2}, & \mbox{if } q \neq 0,\ s
= 0 \mbox{ and } n \geq 3,\\
   r^2\,p^{n-2}, & \mbox{if } q = s = 0 \mbox{ and } n \geq 3.
  \end{cases}
 \end{align}
 where $f(\cdot)$ is the polynomial function defined by
 \begin{equation*}
  f(z) =
  \begin{cases}
   (r-s)\,z^4 + (p s -q^2)(z^3-s z^2) + s^3(s-p)\,z + s^5,& \mbox{if } q
\neq 0 \mbox{ and } s \neq 0,\\
   \, r\,z^3 - p\,s^2\,z + s^4,& \mbox{if } q = 0 \mbox{ and } s \neq 0,\\
   \, r\,z - q^2,& \mbox{if } s = 0.
  \end{cases}
 \end{equation*}
\end{thm}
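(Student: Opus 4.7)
The plan is to combine the recurrence relations of Lemmas \ref{recrela}--\ref{lemaq0} with the closed-form expressions for $e_n$ obtained in Lemmas \ref{l22}--\ref{lemas0}, so that the representation of $d_n$ as a sum over characteristic roots emerges by direct substitution.

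In the main case $q \neq 0$, $s \neq 0$, $n \geq 6$, I would write $e_n = \sum_{j=1}^5 \kappa_j \mu_j^n$ as in Lemma \ref{l22}, substitute this into \eqref{dn}, and pull the common factor $\mu_j^{n-5}$ out of each summand. The bracketed polynomial that remains,
\[
(r-s)\mu_j^4 + (p\,s - q^2)(\mu_j^3 - s\,\mu_j^2) + s^3(s - p)\mu_j + s^5,
\]
is precisely $f(\mu_j)$. The identity $f(z) = z^5 + (r-p)z^4 - \rho(z)$, verifiable by coefficient comparison with the auxiliary polynomial $\rho$ from the proof of Lemma \ref{l22}, makes the appearance of $f$ transparent: at any root of $\rho$ it collapses to $\mu_j^4(\mu_j + r - p)$, which encodes exactly the structural difference between the $d$- and $e$-recurrences (only the leading coefficient $r-s$ vs.\ $p-s$ changes).

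The remaining branches of \eqref{detDn} follow the same template, with the pertinent auxiliary polynomial. For $q = 0$ and $s \neq 0$ I would substitute Lemma \ref{lq0} into \eqref{qdn}; the resulting cubic in $\nu_j$ is the $f$ of the second branch. For $s = 0$ and $q \neq 0$, the matrix $D_n$ is tridiagonal and \eqref{dn} collapses to $d_n = r\,e_{n-1} - q^2\,e_{n-2}$, into which Lemma \ref{lemas0} is inserted to produce the linear $f(z) = r\,z - q^2$. Finally, when $q = s = 0$, the matrix is diagonal with entries $r, p, \ldots, p, r$, and $\det D_n = r^2 p^{n-2}$ is immediate.

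The main obstacle, I expect, is the degenerate sub-cases of Lemmas \ref{l22}--\ref{lemas0}, where the auxiliary polynomial has repeated roots and the coefficients $\kappa_j$ acquire an explicit polynomial dependence on $n$. In this regime the raw substitution generates extra contributions coming from the mismatch between $\kappa_j(n)$ and $\kappa_j(n-k)$, and one must check that they reorganize correctly once the $\kappa_j$ are interpreted as prescribed in Lemmas \ref{l22}--\ref{lemas0}. A clean way to dispatch these cases is a limiting argument: the non-degenerate formula is a rational function of $(p,q,r,s)$ with removable singularities along the degenerate loci, while $\det D_n$ is polynomial in the matrix entries for each fixed $n$, so taking limits along curves in parameter space forces the limiting sum to match the polynomial/rational $\kappa_j$ produced in the repeated-root branches of the lemmas.
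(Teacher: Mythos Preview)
Your approach is essentially the paper's own: its proof consists of a single sentence declaring the result to be ``a consequence of Lemmas \ref{recrela}--\ref{lemas0}'', i.e., precisely the substitution of the closed forms for $e_n$ into the recurrences \eqref{dn} and \eqref{qdn} that you spell out. Your identification of the repeated-root subtlety and the limiting argument you sketch actually go beyond what the paper itself provides.
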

\begin{proof}
 The result in \eqref{detDn} is a consequence of Lemmas
\ref{recrela}--\ref{lemas0}.
\end{proof}

\begin{remark}
 If we consider $D_n$ defined for $n = 3$ and $n = 4$, respectively, as
   \begin{equation}\label{matD34}
    D_3 = \left[%
    \begin{array}{ccc}
     r & q & s \\
     q & p & q \\
     s & q & r
    \end{array}\right]
   \qquad \mbox{and} \qquad
    D_4 = \left[%
    \begin{array}{cccc}
     r & q & s & 0\\
     q & p & q & s\\
     s & q & p & q\\
     0 & s & q & r
    \end{array}\right],
   \end{equation}
 then the expression of the determinant in \eqref{detDn} is true for all
 cases when $n\geq 3$.
\end{remark}


 \section{Application to the centered MA(1) stationary Gaussian process}
 \label{secMA1}

 Consider the stochastic process $(X_n)_{n \in \N}$ defined by the equation
 \begin{equation}\label{ma1}
  X_{n} =  \varepsilon_n + \phi\, \varepsilon_{n-1}, \quad \mbox{with }
|\phi| < 1 \mbox{ and } n \in \N,
 \end{equation}
 where $(\varepsilon_n)_{n \geq 0}$ is a sequence of i.i.d.\ random variables,
with $\varepsilon_n \sim
\mathcal{N}(0,1)$, for each $n \geq 0$. The spectral density
function associated to $(X_n)_{n \in \N}$ is given by
\begin{equation*}
 h_\phi(\omega)= 1 + \phi^2 + 2\,\phi\,\cos(\omega), \quad \mbox{for } \omega
\in \mathbb{T}=[-\pi,\pi).
\end{equation*}
 Since $(X_n)_{n \in \N}$ is stationary (see definition 3.4 in Shumway and
Stoffer \cite{shumway16}), we have by \eqref{ma1} that $X_n \sim
\mathcal{N}(0, 1 +\phi^2)$. Moreover, the hypothesis $|\phi| < 1$ guarantees
that this process is also invertible (see theorem 3.1.2 in Brockwell and Davis
\cite{brockwell91}).

 It is common in natural sources to appear data sets that may be modeled by a
process as the one given in equation \eqref{ma1}. The job of a statistician is
to identify the pattern of these data sets and associate it with such a model.
The process given in \eqref{ma1} is called a moving average process of first
order (MA(1) process). The book by Brockwell and Davis \cite{brockwell91} gives
a full treatment in the subject of MA(1) processes, of which we recall the most
important properties related to it:
\begin{itemize}
 \item if $X$ is a random variable defined on a probability space $(\Omega,
\Sigma, \mathbb{P})$, the expected value is defined by the Lebesgue integral
\begin{equation*}
 \E(X) = \int_{\Omega} X(\omega) \, d\,\mathbb{P}(\omega)
\end{equation*}
and the variance of $X$ is given by $\mbox{Var}(X) = \E(X^2) - \E(X)^2$;

\item the spectral density function of the process $(X_n)_{n \in \N}$ in
\eqref{ma1} satisfies $h_\phi(\omega) = h_\phi(-\omega) >
0$, for all $\omega \in \T$, and $\int_{-\pi}^{\pi} h_\phi(\omega) d \omega <
\infty$;

\item the autocovariance function $\gamma_X(k) = \mathbb{E}(X_{n+k} {X_n}) -
\mathbb{E}(X_{n+k}) \, \mathbb{E}(X_n)$ of $(X_n)_{n \in\N}$ depends on
$h_\phi(\cdot)$ in the sense that
 \begin{equation*}
  \gamma_X(k) = \frac{1}{2\pi}\int_{-\pi}^\pi e^{ik\omega} h_\phi(\omega) d
\omega;
 \end{equation*}

\item the Toeplitz matrix $T_n(h_\phi)$ associated with $h_\phi(\cdot)$
coincides with the autocovariance matrix of the process $(X_n)_{n \in \N}$ and
it is given by
\begin{equation}\label{toepdefi}
 T_n(h_\phi) = \left(\frac{1}{2\pi} \int_{-\pi}^{\pi} e^{i(j-k)\omega}\,
 h_\phi(\omega)\,d\omega \right)_{1\leq j, k \leq n};
\end{equation}

\item the matrix $T_n(h_\phi)$ is symmetric and positive definite.
\end{itemize}

Here we tackle the following problem: let us assume that there is a set of
observations $X_1, \cdots, X_n$ from the process given in \eqref{ma1}. For
$\X_n = (X_1, \ldots, X_n)$ and $\X_n^T$ denoting the transpose of $\X_n$,
consider the random vector
 \begin{equation*}
  \W_n = \frac{1}{n}\left(U_n, V_n\right),
 \end{equation*}
 where
 \begin{equation*}
  U_n = \X_n^T\, T_n(\varphi_1)\, \X_n = \sum_{k=1}^n X_k^2, \qquad V_n =
\X_n^T\, T_n(\varphi_2)\, \X_n = \sum_{k=2}^n X_k X_{k-1},
 \end{equation*}
 and $T_n(\varphi_j)$ being, respectively, the Toeplitz matrices associated
with $\varphi_j(\cdot): \T \rightarrow \R$, for $j = 1, 2$, defined by the
functions
\begin{equation*}
 \varphi_1(\omega) = 1, \qquad \varphi_2(\omega) = \cos(\omega).
\end{equation*}

We are interested in the asymptotic behavior of the normalized cumulant
generating function associated to $\W_n$, defined by
\begin{align*}
 L_n(\bl) &= \frac{1}{n}\log \mathbb{E}(\exp(n
\langle(\lambda_1, \lambda_2),\W_n\rangle)) = \frac{1}{n} \log
\left(\mathbb{E}\exp \left[\lambda_1 U_n + \lambda_2 V_n \right]\right)\\[2mm]
 &= \frac{1}{n} \log \left(\mathbb{E} \exp \left[\X_n^T\left(\lambda_1
T_n(\varphi_1) + \lambda_2 T_n(\varphi_2)\right)\X_n\right] \right),
\end{align*}
for each $\bl = (\lambda_1, \lambda_2) \in \R^2$. From the definition given in
\eqref{toepdefi}, it is easy to show that
linearity holds on Toeplitz matrices. If we set $\varphi_{\bl}(\cdot)
= \lambda_1
\varphi_1(\cdot) + \lambda_2 \varphi_2(\cdot)$, we note that
\begin{equation*}
 L_n(\bl) = \frac{1}{n} \log \left(\mathbb{E}\exp\left[\X_n^T
T_n(\varphi_{\bl}) \X_n\right]\right),
\end{equation*}
with
\begin{equation*}
 T_n(\varphi_{\bl}) = \frac{1}{2} \left(\begin{array}{ccccc}
        2\lambda_1 & \lambda_2 & 0      & \cdots & 0\\
        \lambda_2    & 2 \lambda_1 & \lambda_2 & \ddots & \vdots\\
        0      & \ddots & \ddots & \ddots & 0\\
        \vdots & \ddots & \lambda_2 &  2 \lambda_1 & \lambda_2\\
        0      & \cdots & 0 & \lambda_2 & 2\lambda_1
       \end{array} \right).
\end{equation*}

Since the random vector $\X_n$ follows a $n$-variate Gaussian distribution
and the matrix $T_n(\varphi_\bl)$ is symmetric, as observed in Bercu \etal
\cite{bercu97}, we may rewrite ${\X_n}^T T_n(\varphi_\bl) {\X_n}$ as
\begin{equation*}
 {\X_n}^T\,T_n(\varphi_\bl)\,{\X_n} = \sum_{k=1}^n \alpha_{n,k}^\bl Z_{n,k},
\end{equation*}
where $\{\alpha_{n,k}^\bl\}_{k=1}^n$ are the eigenvalues of
$T_n(\varphi_\bl)\,T_n(h_\phi)$ and $\{Z_{n,k}\}_{k=1}^n$ is a sequence of
i.i.d.\ random variables, each one having a chi-squared distribution with one
degree of freedom. A simple algebraic proof shows that
$\{\alpha_{n,k}^\bl\}_{k=1}^n$ and $\{1 - 2\,\alpha_{n,k}^\bl\}_{k=1}^n$ are
also the eigenvalues of $T_n(h_\phi)^{1/2} \, T_n(\varphi_\bl) \,
T_n(h_\phi)^{1/2}$ and $I_n - 2\,T_n(\varphi_\bl)\,T_n(h_\phi)$,
respectively. Hence, from the independence of the random variables
$\{Z_{n,k}\}_{k=1}^n$, it turns out that $L_n(\bs{\cdot})$ can bee expressed as
(see Karling \etal \cite{karling20}):
\begin{align} \label{normcum}
L_n(\bl) &= \frac{1}{n} \log \left(\mathbb{E}\exp\left[{\X_n}^T\,
T_n(\varphi_\bl)\,{\X_n}\right]\right) =
\begin{cases}
 -\frac{1}{2 n} \sum_{k=1}^n \log (1 - 2 \alpha_{n,k}^\bl),& \mbox{if }
\alpha_{n,k}^\bl < \frac{1}{2},\, \forall\,1 \leq k \leq n,\\
+\infty,& \mbox{otherwise}.
\end{cases}
\end{align}

From \eqref{normcum} we note that the condition $\alpha_{n,k}^\bl <
\frac{1}{2}$, for all $k$ such that $1 \leq k \leq n$, is
the equivalent of requiring that $D_{n,\bl} = I_n -
2\,T_n(\varphi_\bl)\,T_n(h_\phi)$ must be a positive definite matrix, where
\begin{equation} \label{pqrs}
 D_{n,\bl} = \left[%
 \begin{array}{cccccc}
     r & q & s & 0 & \cdots & 0\\
     q & p & q & s & \ddots & \vdots \\
     s & q & p & \ddots & \ddots & 0 \\
     0 & s & \ddots & \ddots & q & s \\
     \vdots & \ddots & \ddots & q & p & q \\
     0 & \cdots & 0 & s & q & r
 \end{array}\right], \hspace{0.6cm} \mbox{with} \hspace{0.6cm}
 \begin{cases}
     r &= 1 - 2 \lambda_1(1+\phi^2) - \lambda_2 \phi,\\
     p &= 1 - 2 \lambda_1(1+\phi^2) - 2 \lambda_2 \phi,\\
     q &= -2\lambda_1 \phi - \lambda_2(1+\phi^2),\\
     s &= -\lambda_2 \phi.
  \end{cases}
 \end{equation}
To avoid confusion, we shall adopt the notation $D_{n,\bl}$ to
distinguish the particular case in \eqref{pqrs} from the general one in
\eqref{matrizdn}, and in the sequel, we say that $D_{n,\bl}$ is the
pentadiagonal matrix associated to the MA(1) process. Thus, it follows that
\begin{align}\label{assynlog}
 L_n(\bl) =
  \begin{cases}
   - \frac{1}{2 n} \log(\det(D_{n,\bl})), & \mbox{if }
D_{n,\bl} \mbox{ is positive definite,}\\
   + \infty, & \mbox{otherwise}.
  \end{cases}
\end{align}
 It remains to check for the convergence of $-(1/2n) \log(\det(D_{n,\bl}))$,
which is given by the next proposition.

\begin{prop}\label{propconvLn}
  Let $\bl = (\lambda_1,\lambda_2) \in \R^2$ and $\D_\bl = \D_\bl^1 \cup
\D_\bl^2$, with $\D_\bl^1$ and $\D_\bl^2$ given in \eqref{D1} and \eqref{D2},
respectively. Then, $\mathcal{L}(\bl) := \lim_{n \to \infty} L_n(\bl) = \lim_{n
\to \infty} -(1/2n) \log(\det(D_{n,\bl}))$, where
  \begin{equation}\label{Llim}
  \mathcal{L}(\bl) =
   \begin{cases}\displaystyle
    -\frac{1}{2} \log\left[\frac{(p - 2s)(1 + \sqrt{1 - A^2})(1 +
\sqrt{1 - B^2})}{4}\right],& \mbox{for }\, \bl \in \D_\bl \setminus
\overline{\D_\bl^0}, \\
    +\infty,& \mbox{otherwise},
   \end{cases}
  \end{equation}
 with
 \begin{equation}\label{AB}
  A = \frac{q - \sqrt{q^2 - 4 s (p -
2 s)}}{p-2s} \quad \mbox{and} \quad B = \frac{q + \sqrt{q^2 - 4 s (p - 2
s)}}{p-2s},
 \end{equation}
$p, q$ and $s$ defined as in \eqref{pqrs}, and
$\overline{\D_\bl^0}$ denotes the closure of $\D_\bl^0$, given in \eqref{D0}.
 \end{prop}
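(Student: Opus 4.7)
The plan is to exploit a structural coincidence: under the parametrization \eqref{pqrs}, a direct algebraic check gives $r-(p-s) = \lambda_2\phi - \lambda_2\phi = 0$, so $D_{n,\bl}$ falls automatically into the special case $r = p-s$ of Subsection \ref{specsec}. Corollary \ref{nulleigenvalue} then supplies both the full spectrum and the determinant in closed product form without invoking Theorem \ref{thedet}. With the Fasino polynomial $g(x) = sx^2 + qx + (p-2s)$ from Lemma \ref{fasino}, one has $\alpha_{n,k}^\bl = g\bigl(2\cos\frac{k\pi}{n+1}\bigr)$ and $\det(D_{n,\bl}) = \prod_{k=1}^n g\bigl(2\cos\frac{k\pi}{n+1}\bigr)$.

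Having the spectrum explicitly serves two purposes at once. Since the nodes $\{2\cos(k\pi/(n+1))\}_{k=1}^n$ are dense in $[-2,2]$ as $n\to\infty$, positive definiteness of $D_{n,\bl}$ for all large $n$ is equivalent to $g$ being strictly positive on $(-2,2)$; I would translate the four subdomains $\D_1,\dots,\D_4$ of Lemma \ref{fasino} through \eqref{pqrs} to identify $\D_\bl = \D_\bl^1\cup\D_\bl^2$ in \eqref{D1}--\eqref{D2}, and similarly identify $\D_\bl^0$ in \eqref{D0} as the image under \eqref{pqrs} of the null-eigenvalue locus of Corollary \ref{nulleigenvalue}(3). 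For $\bl \in \D_\bl \setminus \overline{\D_\bl^0}$, equation \eqref{assynlog} together with the product formula yields
$$L_n(\bl) = -\frac{1}{2n}\sum_{k=1}^n \log g\!\Bigl(2\cos\tfrac{k\pi}{n+1}\Bigr),$$
which is a Riemann sum for the continuous, strictly positive function $\omega \mapsto \log g(2\cos\omega)$ on $[0,\pi]$. Hence $L_n(\bl) \to -\frac{1}{2\pi}\int_0^\pi \log g(2\cos\omega)\,d\omega$.

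The core computation is to evaluate this integral and match it with \eqref{Llim}. Factoring $g(x) = s(x-x_1)(x-x_2)$ via the roots \eqref{x12}, the integral splits as $\pi\log|s| + \sum_{i=1}^2 \int_0^\pi \log|2\cos\omega - x_i|\,d\omega$. A Jensen-type identity on the circle (writing $u = e^{i\omega}$ and factoring $u^2 - x_i u + 1 = (u-z_i)(u-z_i^{-1})$ with $|z_i| \geq 1$) produces $\int_0^\pi \log|2\cos\omega - x_i|\,d\omega = \pi\log\frac{|x_i|+\sqrt{x_i^2-4}}{2}$, valid here because $|x_i|>2$ on $\D_\bl\setminus\overline{\D_\bl^0}$ by positivity of $g$ on $[-2,2]$. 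The translation to $(A,B)$ rests on Vieta applied to both $sx^2+qx+(p-2s)$ and $(p-2s)t^2-2qt+4s$, which shows $\{A,B\} = \{-2/x_1, -2/x_2\}$; a short manipulation then gives $(|x_i|+\sqrt{x_i^2-4})/2 = (1+\sqrt{1-A_i^2})/|A_i|$, and using $|AB| = 4|s|/|p-2s|$ collapses everything to $|p-2s|(1+\sqrt{1-A^2})(1+\sqrt{1-B^2})/4$. Finally, $g(0) = p-2s > 0$ on $\D_\bl$ removes the absolute value, matching \eqref{Llim}.

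The $+\infty$ branch is the last step. For $\bl \notin \D_\bl$, Lemma \ref{fasino} read in reverse yields $g(x_0)<0$ for some $x_0 \in [-2,2]$; by continuity and density of the nodes, some $\alpha_{n,k}^\bl$ is negative for all large $n$, so $D_{n,\bl}$ fails positive definiteness and \eqref{assynlog} forces $L_n(\bl) = +\infty$. On $\overline{\D_\bl^0}$, Corollary \ref{nulleigenvalue}(3) places a zero of $g$ at a node for infinitely many $n$, and even for the remaining $n$ the minimum eigenvalue is $O(1/n^2)$ or smaller, so $-\frac{1}{2n}\log\det(D_{n,\bl}) \to +\infty$. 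The two principal obstacles I foresee are the algebraic collapse in the previous paragraph (matching the two very different-looking parametrizations) and this last uniform control of boundary behavior along arbitrary subsequences of $n$ on $\overline{\D_\bl^0}$.
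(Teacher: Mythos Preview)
Your main limiting step differs genuinely from the paper's. The paper never feeds the closed-form eigenvalues of Lemma \ref{eigelema} into a Riemann sum; instead it keeps $L_n(\bl)$ expressed through the eigenvalues $\alpha_{n,k}^{\bl}$ of the product $T_n(\varphi_{\bl})T_n(h_\phi)$, checks the identity $1-2(\varphi_{\bl} h_\phi)(\omega)=g(2\cos\omega)$, and invokes a Szeg\H{o}-type distribution result (theorem 5.1 in Tyrtyshnikov \cite{tyrtyshnikov94}) to obtain $\lim_n L_n(\bl)=-\frac{1}{4\pi}\int_{-\pi}^\pi\log g(2\cos\omega)\,d\omega$ directly. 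Your route through Corollary \ref{nulleigenvalue} and Riemann sums is more elementary and entirely self-contained, at the price of leaning on the special $r=p-s$ structure; the paper's Toeplitz-distribution argument would transfer to other quadratic functionals of Gaussian processes where no closed spectrum is available. For the integral itself the two computations coincide: the paper packages it as Lemma \ref{lemabc} via $\int_0^\pi\log[1+z\cos\omega]\,d\omega=\pi\log\bigl[(1+\sqrt{1-z^2})/2\bigr]$ applied directly to $A,B$, while your Jensen/Vieta reduction reaches the same endpoint, though your intermediate form $\bigl(|x_i|+\sqrt{x_i^2-4}\bigr)/2$ needs separate handling when $x_1,x_2$ are a complex-conjugate pair (the interior of $\D_\bl^1$).

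Your $+\infty$ argument on $\overline{\D_\bl^0}$ contains a real slip. A smallest eigenvalue of order $1/n^2$ contributes only $O((\log n)/n)\to 0$ to $-\frac{1}{2n}\log\det(D_{n,\bl})$, not $+\infty$. Concretely, when $g$ has a double root at $x_0=2\cos\theta_0\in(-2,2)$ one computes $\det(D_{n,\bl})=s^n\bigl(U_n(\cos\theta_0)\bigr)^2=s^n\sin^2((n{+}1)\theta_0)/\sin^2\theta_0$, so $L_n(\bl)+\tfrac12\log s=-\tfrac{1}{n}\log|\sin((n{+}1)\theta_0)|+o(1)$, whose behaviour is governed by the Diophantine type of $\theta_0/\pi$ rather than diverging uniformly. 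The paper does not attempt to prove divergence here either: it simply asserts that the limit fails to exist on $\overline{\D_\bl^0}\setminus\D_\bl^0$ and \emph{defines} $\mathcal{L}(\bl)=+\infty$ there as a convention, so you should not claim more than that.
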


 \begin{proof}
 As $D_{n,\bl}$ is a matrix that satisfies the relation $r = p-s$, Lemma
\ref{fasino} and Corollary \ref{nulleigenvalue} are applicable. The domains
$\D_1, \D_2, \D_3, \D_4$ in \eqref{D1234} can be rewritten in terms of
$\lambda_1, \lambda_2$ and $\phi$, as the union of the two following sets
\begin{align}
    \D_\bl^1= & \left\{\frac{1 + 4\lambda_2 \phi}{2 (1+\phi^2)}
\leq \lambda_1 \leq \frac{1}{2(1+\phi^2)}\,, \ (2\lambda_1 \phi
+ \lambda_2(1+\phi^2))^2 \leq -4 \lambda_2 \phi (1-2\lambda_2 (1+\phi^2))
\right\} \label{D1}
\intertext{and}
    \D_\bl^2 = &\left\{\frac{-1+2\lambda_1(1+\phi^2)}{4} <
\lambda_2 \phi \leq \frac{1-2\lambda_1(1+\phi^2)}{4}\,,\ \lambda_1 -
\frac{1}{2(1-\phi)^2} \leq \lambda_2 \leq \frac{1}{2(1+\phi)^2} - \lambda_1
\right\}. \label{D2}
\end{align}

From Corollary \ref{nulleigenvalue} we conclude that $D_{n,\bl}$ has at
least one null eigenvalue inside $\D_\bl$ if $\bl$ belongs to
\begin{equation}
\D_\bl^0 = \bigcup_{\substack{1\,\leq\,k\,\leq\,n \\ n
\in \N}}
\left\{\lambda_1 = \frac{1 + \phi^2 + 4 \phi
\cos \left(\frac{k \pi}{n+1}\right)}{2 \left(1 + \phi^2 + 2 \phi  \cos
\left(\frac{k \pi}{n+1}\right)\right)^2}\,, \ \lambda_2 =
\frac{-\phi}{\left(1+\phi^2 + 2 \phi \cos\left(\frac{k
\pi}{n+1}\right)\right)^2}
\right\}. \label{D0}
\end{equation}
As a result of that, $D_{n,\bl}$ is positive definite, for all $n \in \N$, if
$(\lambda_1, \lambda_2)$ is considered inside $\D_\bl \setminus
\D_\bl^0$, implying that $-\frac{1}{2n} \log(\det(D_{n,\bl}))$ is finite, for
all $n \in \N$. However, we need to be careful when taking the limit as $n \to
\infty$. Although $D_{n,\bl}$ is positive definite in $\overline{\D_\bl^0}
\setminus \D_\bl^0$, asymptotically speaking, the limit $\lim_{n \to \infty}
L_n(\bl)$ does not exist over this set. Consequently, we may define
$\mathcal{L}(\bl) = +\infty$, if $\bl \notin \D_\bl \setminus
\overline{\D_\bl^0}$. Henceforth, we shall restrict our analysis to the
set $\D_\bl \setminus \overline{\D_{\bl}^0}$.

Consider in what follows the measure space $L^\infty(\T) := L^\infty(\T,
\mathcal{B}(\T), \mathbb{L})$, were $\mathbb{L}(\cdot)$ is the Lebesgue measure
acting on $\mathcal{B}(\T)$, the  Borel $\sigma$-algebra over $\T =
[-\pi,\pi)$. Since $\varphi_{\boldsymbol{\lambda}}, \,h_\phi \in L^\infty(\T)$,
it is straightforward to show that
\begin{equation}\label{bounded}
 |\alpha_{n, k}^{\bl}| \leq ||\varphi_{\bl}||_\infty ||h_\phi||_\infty, \quad
\mbox{for all } 1 \leq k \leq n \mbox{ and } n \in \N,
\end{equation}
where $|| \cdot ||_\infty$ denotes the usual norm in $L^\infty(\T)$ (see
definition 6.15 in Bartle \cite{bartle95}). The function
$\varphi_{\bl}\,h_\phi: \T \rightarrow \R$, defined by
 \begin{equation*}
  (\varphi_{\bl}\,h_\phi)(\omega) =
\varphi_{\bl}(\omega)\, h_\phi(\omega) = (\lambda_1+\lambda_2
\cos(\omega))(1+\phi^2 + 2 \phi\cos(\omega)),
 \end{equation*}
is continuous and bounded in $\T$,
hence it attains a maximum and a minimum in that interval. Let
$m_{\varphi_{\bl}
\, h_\phi}$ and $M_{\varphi_{\bl} \, h_\phi}$ denote, respectively, the
\emph{minimum} and the \emph{maximum} of $(\varphi_{\bl}\,h_\phi)(\cdot)$, i.e.,
   \begin{equation*}
    m_{\varphi_{\bl} \, h_\phi} = \min_{\omega \in
\T}\{(\varphi_{\bl} \, h_\phi)(\omega)\} \quad  \mbox{and}
\quad
M_{\varphi_{\bl} \, h_\phi}=\max_{\omega \in
\T}\{(\varphi_{\bl} \, h_\phi)(\omega)\}.
   \end{equation*}
It follows that $m_{\varphi_{\bl} \, h_\phi}$ and $M_{\varphi_{\bl} \,
h_\phi}$ coincide, respectively, with the \emph{essential lower and upper
bounds} of $(\varphi_{\bl}\,h_\phi)(\cdot)$
(see page 65 in Grenander and Szeg\"o \cite{grenander58}). Moreover, one can
verify that
\begin{equation*}
 m_{\varphi_{\bl}\,h_\phi}, M_{\varphi_{\bl}\,h_\phi} \in
\left\{(\lambda_1+\lambda_2)(1+\phi)^2,\ (\lambda_1-\lambda_2)(1-\phi)^2,\
-\frac{\left(\lambda_2 (1+\phi^2)-2 \lambda_1 \phi\right)^2}
{8\lambda_2 \phi} \right\}.
\end{equation*}

Note that
  \begin{align}
   1 - 2\,(\varphi_{\bl}\,h_\phi)(\omega) &= 1 - 2\,(\lambda_1+\lambda_2
\cos(\omega))(1+\phi^2 + 2 \phi\cos(\omega)) \nonumber\\
   &= 1 - 2\lambda_1 (1+\phi^2) - 2\,(2\lambda_1 \phi
+ \lambda_2(1+\phi^2)) \cos(\omega) - 4 \lambda_2 \phi \cos^2(\omega)
\nonumber\\
   &= (p - 2s) + q (2 \cos(\omega)) + s (2 \cos(\omega))^2 = g(2
\cos(\omega)),\label{gcos}
  \end{align}
where $g(\cdot)$ is the second‐order polynomial given in \eqref{polyg}, but
for the particular case when $p,q$ and $s$ are given by \eqref{pqrs}. If
$\bl \in \D_\bl \setminus \overline{\D_{\bl}^0}$, we have $g(x) \geq 0$, for all
$x \in [-2,2]$, and from \eqref{gcos} it follows that
\begin{align*}
1 - 2\,(\varphi_{\bl}\,h_\phi) (\omega) = g(2 \cos(\omega)) \geq 0, \forall\,
\omega \in \T \ \Rightarrow \ (\varphi_{\bl}\,h_\phi)(\omega) \leq 1/2,
\forall\, \omega \in \T.
\end{align*}
Thus, $M_{\varphi_{\bl} \, h_\phi} \leq 1/2$ for all $(\lambda_1,
\lambda_2) \in \D_\bl \setminus \overline{\D_{\bl}^0}$. On the other hand, from
   \begin{equation*}
    ||\varphi_{\bl}||_\infty ||h_\phi||_\infty \geq
||\varphi_{\bl} \, h_\phi||_\infty =
\max\{|M_{\varphi_{\bl}
\, h_\phi}|, |m_{\varphi_{\bl} \, h_\phi}|\} \geq
-m_{\varphi_{\bl} \, h_\phi},
   \end{equation*}
   we obtain $m_{\varphi_{\bl} \, h_\phi} \geq
-||\varphi_{\bl}||_\infty ||h_\phi||_\infty$. Therefore, if $\bl \in \D_\bl
\setminus \overline{\D_{\bl}^0}$, then
\begin{equation}\label{asser}
 [m_{\varphi_{\bl} \, h_\phi}, M_{\varphi_{\bl} \, h_\phi}] \subseteq [-
 ||\varphi_{\bl}||_\infty ||h_\phi||_\infty, 1/2].
\end{equation}

Let us consider the continuous extended function $F:[- ||\varphi_{\bl}||_\infty
||h_\phi||_\infty, 1/2] \rightarrow \R \cup \{\infty\}$, defined by
\begin{equation*}
 F(x) = -\frac{\log(1 - 2 x)}{2}.
\end{equation*}
Note that $F(\cdot)$ has a bounded support (i.e., the set of
those $x \in \R$ for which $F(x) \neq 0$ is bounded) and,  as a consequence from
\eqref{bounded} and \eqref{asser}, if $\bl \in \D_\bl
\setminus \overline{\D_{\bl}^0}$, we infer that $F(\alpha_{n,k}^{\bl})$ are
finite for every $1 \leq k \leq n$ and $n \in \N$. Then, it follows from theorem
5.1 in Tyrtyshnikov \cite{tyrtyshnikov94} that
\begin{equation*}
 \lim_{n \to \infty} \frac{1}{n} \sum_{k=1}^n F(\alpha_{n,k}^{\bl})
= \frac{1}{2\pi} \int_{\T} (F\circ(\varphi_{\bl}\,h_\phi))(\omega)\,d\omega.
   \end{equation*}
In particular, we have
\begin{align}
 & \lim_{n \to \infty} L_n(\lambda_1, \lambda_2)\ =\ \lim_{n \to \infty}
-\frac{1}{2n} \sum_{k=1}^n \log(1 - 2 \alpha_{n,k})\ =\ \lim_{n \to \infty}
\frac{1}{n} \sum_{k=1}^n F(\alpha_{n,k})\ =\ \frac{1}{2\pi} \int_{\T}
(F\circ(\varphi_{\bl}\, h_\phi))(\omega)\, d\omega \nonumber \\
 & = -\frac{1}{4\pi}\int_{-\pi}^\pi \log \Big(1- 2 \,h_\phi(\omega)\,
\varphi_{\bl}(\omega) \Big) d\omega = -\frac{1}{4\pi} \int_{-\pi}^\pi \log
\left[1 - 2\,(1+\phi^2 + 2\phi \cos(\omega)) \left(\lambda_1 + \lambda_2
\cos(\omega)\right) \right] d\omega \nonumber \\
 & \hspace{1.8cm} = -\frac{1}{4\pi} \int_{-\pi}^\pi \log \left[1 - 2\lambda_1
(1+\phi^2) -
2\,\right(2\lambda_1 \phi + \lambda_2(1+\phi^2)\left) \cos(\omega) - 4
\lambda_2 \phi \cos^2(\omega)\right] d\omega
\nonumber\\
 & \hspace{3.6cm} = -\frac{1}{4\pi} \int_{-\pi}^\pi \log \left[p - 2s + 2 q
\cos(\omega) + 4 s \cos^2(\omega) \right] d\omega, \label{inta}
\end{align}
where $p, q$ and $s$ are given by \eqref{pqrs}. Considering $A$ and $B$
as in \eqref{AB}, from Lemma \ref{lemabc} (see Appendix \ref{ApenLemma}) it
follows that
\begin{equation}\label{intpqs}
 \int_{-\pi}^\pi \log \left[p - 2s + 2 q \cos(\omega) + 4 s
 \cos^2(\omega) \right] d\omega  = 2 \pi \log\left[\frac{(p - 2s)(1 + \sqrt{1 -
 A^2})(1 + \sqrt{1 - B^2})}{4}\right].
\end{equation}
In conclusion, \eqref{Llim} now follows from \eqref{inta} and \eqref{intpqs}.
\end{proof}

In Figure \ref{domains}, we ploted the domain $\D_\bl = \D_\bl^1 \cup
\D_\bl^2$, for $\D_\bl^1$ and $\D_\bl^2$ given, respectively, in \eqref{D1} and
\eqref{D2} for $\bl \in[-2,0.5] \times [-3,2]$ and $\phi = 1/3$. In this
figure, we also ploted some of the points $(\lambda_1,\lambda_2)$ that belong
to $\D_\bl^0$, given in \eqref{D0}. Notice how they scatter just over one side
of the boundary of $\D_\bl$.

\begin{figure}[ht!]
   \centering
   \includegraphics[scale=0.7]{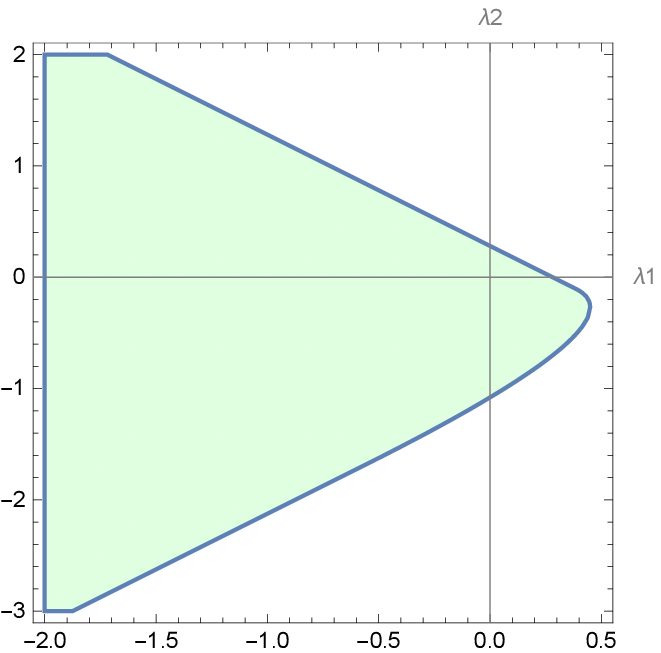} \hspace{1cm}
   \includegraphics[scale=0.6]{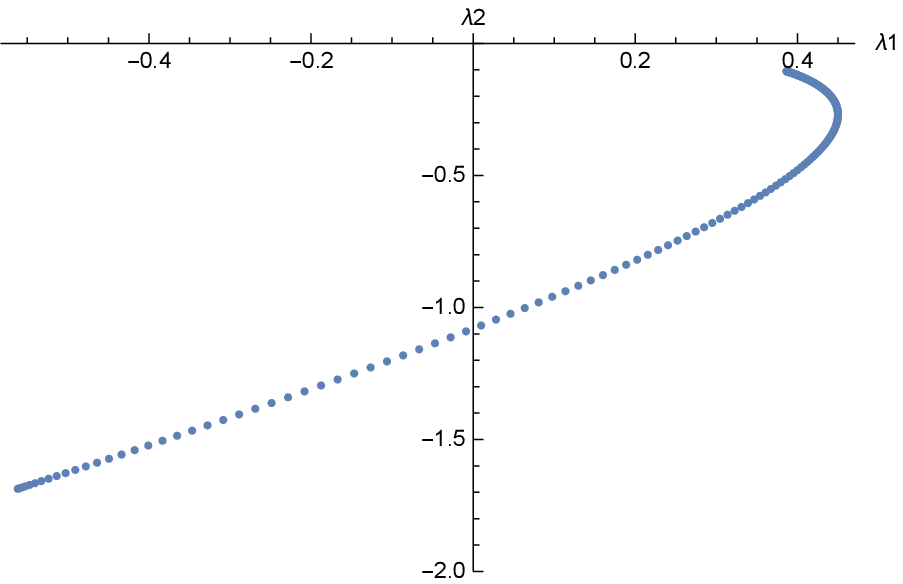}
   \caption{Domain $\D_\bl = \D_\bl^1 \cup \D_\bl^2$, for $\D_\bl^1$ and
$\D_\bl^2$ given, respectively, in \eqref{D1} and \eqref{D2}, for the case
when $\phi = 1/3$, with $(\lambda_1, \lambda_2) \in [-2,0.5] \times [-3,2]$.
On the left is the union $\D_\bl = \D_\bl^1 \cup \D_\bl^2$. On
the right, the points $(\lambda_1,\lambda_2)$ with $\lambda_1
= \frac{1 + \phi^2 + 4 \phi  \cos \left(\frac{\pi  k}{n+1}\right)}{2
\left(1+\phi^2+2 \phi  \cos \left(\frac{\pi  k}{n+1}\right)\right)^2}$ and
$\lambda_2 = \frac{-\phi }{\left(1+\phi^2+2 \phi  \cos \left(\frac{\pi
k}{n+1}\right)\right)^2}$, for $n = 200$ and $1 \leq k \leq 200$, are plotted.
\label{domains}}
\end{figure}



\section{Examples}
Here we introduce three examples that illustrate the theory presented in the
preceding sections. The first one gives a counterexample to show that
proposition 5 in Fasino \cite{fasino88} (and by consequence Lemma \ref{fasino})
is not true if the condition $r \geq p - s$ is not verified.

\begin{example}{}\label{condfasino}
 Consider $p = 5$, $q = -1$, $r = 1$ and $s = 2$, so that $D_n$, defined in
\eqref{matrizdn}, is given by
\begin{equation}\label{Dnexe1}
 D_n = \left[%
 \begin{array}{cccccc}
  1 & -1 & 2 & 0 & \cdots & 0\\
  -1 & 5 & -1 & 2 &\ddots&\vdots \\
  2 & -1 & 5 & \ddots & \ddots & 0 \\
  0 & 2 & \ddots & \ddots & -1 & 2 \\
  \vdots & \ddots & \ddots & -1 & 5 & -1 \\
  0 & \cdots & 0 & 2 & -1 & 1
 \end{array}\right].
\end{equation}
Note that $1 = r < p - s = 2$. Since $q^2 - 4 s (p-2s) = -7$, the polynomial
function $g(x) = s x^2 + q x + p-2s = 2 x^2 -x + 1$ has no real roots. We
observe that, even though $g(x) > 0$ for all $x \in
\R$, the matrix $D_n$ in \eqref{Dnexe1} cannot be non-negative definite for
all $n \in \N$. In fact, when computing its eigenvalues, we observe that
$D_n$ has one negative eigenvalue if $5 \leq n \leq 8$, and two negative
eigenvalues if $9 \leq n \leq 100$, suggesting that proposition 5 in Fasino
\cite{fasino88} does not hold in the absence of the condition $r \geq p - s$.
Nevertheless, it is still possible to compute the determinant of $D_n$ by using
the result of Theorem \ref{thedet}. The coefficients required for this
computation are (in approximated form)
\begin{equation*}
 \kappa_1 = 0.163717 + 0.05368 i, \ \kappa_2 = 0.163717 - 0.05368 i,
\ \kappa_3 = -0.395173,\ \kappa_4 = -0.075118, \ \kappa_5 = 1.14286,
\end{equation*}
and
\begin{equation*}
 \mu_1 = -1.94374 - 0.471031 i,\quad \mu_2 = -1.94374 + 0.471031 i,\quad \mu_3
= 1.03951 , \quad \mu_4 = 3.84797, \quad \mu_5 = 2.
\end{equation*}
Although $\kappa_1, \kappa_2, \mu_1$ and $\mu_2$ are complex numbers, the
determinant is real and it is given by
\begin{equation*}
 \det(D_n) = \sum_{j=1}^5 \kappa_j\,f(\mu_j)\,\mu_j^{n-5},
\end{equation*}
where $f(z) = -z^4+9 z^3-18 z^2-24 z+32$.

Table \ref{numdet1} presents the values of $\det(D_n)$ for four different
values of $n$, obtained with the help of the Wolfram Mathematica
software, operating in an Intel Core i7-8565U processor. For comparison
reasons, when using the determinant function available in this software, the
computational time registered for $n = 2000$ was $0.828125$ seconds. As Table
\ref{numdet1} shows, the formula presented in Theorem \ref{thedet} allows to
compute the determinant of the matrix in \eqref{Dnexe1} much faster than the
usual algorithms do.
\begin{table}[h]
\caption{Approximated values of $\det(D_n)$, when $n \in \{5,\, 5 \times
10^6,\, 5 \times 10^7,\, 5 \times 10^8\}$.}\label{numdet1}
 \begin{center}
 \begin{tabular}{|c||c|c|c|c|}
  \hline \hline
   $n$ & $5$ & $5 \times 10^6$ & $5 \times 10^7$ & $5 \times 10^8$\\
   \hline
   $\det(D_n)$ & $-40$ & $1.65193 \times 10^{2926158}$ & $8.47348 \times
10^{29261604}$ & $1.06844\times 10^{292616072}$ \\
   \hline
   Time (in seconds) & $\approx 0$ & $0.015625$ & $0.046875$ & $0.453125$ \\
   \hline \hline
 \end{tabular}
\end{center}
\end{table}
\fim
\end{example}

The next example clarifies the theory presented in Section \ref{secMA1}.

\begin{example}{}
Consider $\phi = 1/3$ fixed and let $(X_n)_{n \in \N}$ denote the MA(1) process
defined in Section \ref{secMA1}. We demonstrated that the normalized cumulant
generating function associated to the random sequence
$\left(n^{-1}\left(\sum_{k=1}^n X_k^2, \sum_{k=2}^n X_k
X_{k-1}\right)\right)_{n \geq 2}$ can be written as in \eqref{assynlog}. For
instance, if $\bl = (-1,-1)$, then $D_{n,\bl}$ is the pentadiagonal matrix
given by
\begin{equation*}
 D_{n,(-1,-1)} = \left[%
 \begin{array}{cccccc}
  \frac{32}{9} & \frac{16}{9} & \frac{1}{3} & 0 & \cdots & 0\\
  \frac{16}{9} & \frac{35}{9} & \frac{16}{9} & \frac{1}{3}
&\ddots&\vdots \\
  \frac{1}{3} & \frac{16}{9} & \frac{35}{9} & \ddots & \ddots & 0 \\
  0 & \frac{1}{3} & \ddots & \ddots & \frac{16}{9} & \frac{1}{3} \\
  \vdots & \ddots & \ddots & \frac{16}{9} & \frac{35}{9} & \frac{16}{9} \\
  0 & \cdots & 0 & \frac{1}{3} & \frac{16}{9} & \frac{32}{9}
 \end{array}\right].
\end{equation*}
 The vector $(-1,-1)$ belongs to the interior of $\D_\bl^2$, defined
in \eqref{D2}. Hence, from Proposition \ref{propconvLn} we conclude that
\begin{equation*}
 \lim_{n \to \infty} L_n(-1,-1) = \lim_{n \to \infty} -(1/2n)
\log[\det(D_{n,(-1,-1)})] = \mathcal{L}(-1,-1)
\end{equation*}
where
\begin{equation} \label{convexamp}
\mathcal{L}(-1,-1) = -\frac{1}{2} \log\left[\frac{(p-2s)(1 +
\sqrt{1-A^2})(1+\sqrt{1-B^2})}{4}\right] \approx -0.548981,
\end{equation}
with $p = 35/9$, $q = 16/9$, $s = 1/3$, and
\begin{equation*}
A = \frac{q - \sqrt{q^2-4s(p-2s)}}{p-2s} = \frac{16-2i\sqrt{23}}{29}
\quad \mbox{and} \quad
B = \frac{q + \sqrt{q^2-4s(p-2s)}}{p-2s} = \frac{16+2i\sqrt{23}}{29}.
\end{equation*}
Table \ref{numdet} presents the values of $L_n(-1,-1) = -(1/2n)
\log[\det(D_{n,(-1,-1)})]$ for $n \in \{5, 10, 50, 100, 500\}$. Notice that,
even for a small value of $n = 5$, the term $L_n(-1,-1)$ is
relatively close to the asymptotic value in \eqref{convexamp}.

\begin{table}[h]
\caption{Approximated values of $L_n(-1,-1)$, for $n \in \{5, 10, 50, 100,
500\}$.}\label{numdet}
 \begin{center}
 \begin{tabular}{|c||c|c|c|c|c|}
  \hline \hline
   $n$ & $5$ & $10$ & $50$ & $100$ & $500$\\
   \hline
   $L_n(-1,-1)$ & $-0.554116$ & $-0.551548$ & $-0.549495$ &
$-0.549238$ & $-0.549032$ \\
   \hline \hline
 \end{tabular}
\end{center}
\end{table}
\fim
\end{example}

In the following example, we show that in the case when $r = p - s$, the
eigenvalues of the matrix $D_n$ in \eqref{matrizdn} feature a periodic
behavior. This is due to the result of Lemma \ref{eigelema}.

\begin{example}{}
Consider once more the MA(1) process with $\phi = 1/3$. As shown in Lemma
\ref{eigelema}, since the matrix $D_{n,\bl}$ in \eqref{pqrs} satisfies the
relation $r = p - s$, for any pair $\bl \in \R^2$, the eigenvalues of this
matrix are given by $\alpha_{n,k} = 4 s \cos^2\left(\frac{k \pi}{n+1}\right) + 2
q \cos\left(\frac{k \pi}{n+1}\right) + p - 2 s$, for $p, q, r, s$ defined in
\eqref{pqrs} and $1 \leq k \leq n$. If we take a point $\bl$ outside the range
of $\D_\bl$, we shall have an enumerable set of negative eigenvalues of
$D_{n,\bl}$. For instance, if $\bl = (0,1)$, then $p = 1/3$, $q = -10/3$, $r =
2/3$ and $s = -1/3$. The matrix $D_{n,(0,1)}$ is therefore given by
\begin{equation*}
 D_{n,(0,1)} = \left[%
 \begin{array}{cccccc}
  \frac{2}{3} & -\frac{10}{3} & -\frac{1}{3} & 0 & \cdots & 0\\
  -\frac{10}{3} & \frac{1}{3} & -\frac{10}{3} & -\frac{1}{3}
&\ddots&\vdots \\
  -\frac{1}{3} & -\frac{10}{3} & \frac{1}{3} & \ddots & \ddots & 0 \\
  0 & -\frac{1}{3} & \ddots & \ddots & -\frac{10}{3} & -\frac{1}{3} \\
  \vdots & \ddots & \ddots & -\frac{10}{3} & \frac{1}{3} & -\frac{10}{3} \\
  0 & \cdots & 0 & -\frac{1}{3} & -\frac{10}{3} & \frac{2}{3}
 \end{array}\right].
\end{equation*}
If $n = 5$, the eigenvalues of $D_{n,(0,1)}$ are
\begin{equation}\label{eige1}
 \alpha_{5,1} = -\frac{10}{3 \sqrt{3}}, \quad \alpha_{5,2} = -\frac{4}{9},
\quad \alpha_{5,3} = 1, \quad \alpha_{5,4} = \frac{16}{9}, \quad \alpha_{5,5} =
\frac{10}{3 \sqrt{3}}.
\end{equation}
If we take $n = 11$, these same eigenvalues will appear as
\begin{equation*}
 \alpha_{11,2} = -\frac{10}{3 \sqrt{3}}, \quad \alpha_{11,4} = -\frac{4}{9},
\quad \alpha_{11,6} = 1, \quad \alpha_{11,8} = \frac{16}{9}, \quad
\alpha_{11,10} = \frac{10}{3 \sqrt{3}}.
\end{equation*}
In fact, if $k \equiv 5\,(\mbox{mod } 6)$, then the values in \eqref{eige1}
will be eigenvalues of $D_{k,(0,1)}$. Consequently, since $\alpha_{5,1}$ and
$\alpha_{5,2}$ are already negative, $D_{k,(0,1)}$ cannot be
non-negative definite.

This reasoning is not restricted to the pentadiagonal matrix associated to the
MA(1) process. If $D_n$ in \eqref{matrizdn} has arbitrary values for $p, q, s$,
and $r$ is such that $r = p - s$, then its eigenvalues also share
this periodic property, due to Lemma \ref{eigelema}. The point here is that the
presence of periodic eigenvalues does not allow the existence of some $n_0 \in
\N$ such that $D_n$ is positive or non-negative definite for $n \geq n_0$.

\fim
\end{example}


\section{Conclusions}\label{conclusionsection}

In this work, we have examined some determinantal properties of the general
matrix $D_n$ in \eqref{matrizdn}. We gave explicit expressions for the
determinant via recurrence relations, providing an alternative to
the existing expressions given in the literature. Theorem \ref{thedet}, with
the help of five lemmas, presents the explicit expression for the determinant
of $D_n$, showing its dependence on $n \geq 3$. We also analyzed when the
matrix $D_n$ is non-negative definite in the presence of the restriction $r
\geq p - s$. This is achieved through the use  of proposition 5
in Fasino \cite{fasino88}, that helped us to provide the precise expressions
for the domains in \eqref{D1234}. Furthermore, when $r = p - s$, we showed the
explicit domain in which $D_n$ is actually positive definite (in the strict
sense). Example \ref{condfasino} is important to show that the condition
$r \geq p - s$ is essential for proposition 5 in Fasino \cite{fasino88}.

We have indicated the importance of the linear
algebra analysis, self-contained in the present work, by applying
these results to the stationary centered moving average process of first
order with Gaussian innovations.  An explicit expression for
the normalized cumulant generating function $L_n(\cdot)$, associated to $\W_n$,
described in expression \eqref{Wn}, was exhibited. Proposition \ref{propconvLn}
presents the limit of this function $L_n(\cdot)$, when $n$ goes to infinity, in
a closed expression, whenever it is well defined.  In Example 5.2, with the help
of Proposition \ref{propconvLn}, we expressed the value of the limit
$\mathcal{L}(\cdot)$ in a particular case. Whereas in Example 5.3 we exhibit
the case $r = p\,–\,s$, where the eigenvalues of the matrix $D_n$ in \eqref{matrizdn} feature a periodic behavior, due to Lemma \ref{eigelema}. Finally, we mention that one can obtain the  partial derivatives  of $\mathcal{L}(\bl)$ with respect to $\bl$ by using the Wolfram Mathematica software. From this, one can access an explicit form of the moments for
the underlying random process.

\appendix
\section{A Useful Lemma}\label{ApenLemma}

In this section, we show a useful lemma that makes it possible to compute the
integral in \eqref{intpqs} and which extends the result given in equation
4.224(9) in Gradshteyn and Ryzhik \cite{gradshteyn07}.

 \begin{lemma}\label{lemabc}
  Consider $a, b, c \in \R$ such that $a + b x + c x^2 \geq 0$, for $|x|
\leq 1$. Let
  \begin{equation*}
   \gamma_1 = \frac{b - \sqrt{b^2 - 4 a c}}{2 a} \quad \mbox{and} \quad
\gamma_2 = \frac{b + \sqrt{b^2 - 4 a c}}{2 a}.
  \end{equation*}
  Then, it follows that
  \begin{equation}\label{logabc}
   \int_{0}^\pi \log \left[a + b \cos(\omega) + c \cos^2(\omega)\right] d\omega
= \begin{cases}
   \pi \log\left(\frac{c}{4}\right)& \mbox{if }\ a = b = 0,\ c > 0,\\[2mm]
   \pi \log\left[\frac{a\,(1+\sqrt{1-\gamma_1^2})
(1+\sqrt{1-\gamma_2^2})}{4}\right],& \mbox{otherwise}.
  \end{cases}
  \end{equation}
 \end{lemma}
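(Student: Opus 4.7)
The plan is to reduce the integral to a classical one (essentially Gradshteyn–Ryzhik 4.224(9)) after a convenient factorization of the quadratic in the argument of the logarithm. First I would dispatch the trivial case $a=b=0$, $c>0$ directly: the integrand becomes $\log c + 2\log|\cos\omega|$, and the identity $\int_0^\pi \log|\cos\omega|\,d\omega = -\pi \log 2$ immediately yields $\pi\log(c/4)$.

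In the remaining case, evaluating the hypothesis at $x=0$ forces $a\geq 0$; moreover $a=0$ together with non-negativity on $[-1,1]$ would force $b=0$ (otherwise the linear term $bx$ changes sign near $0$), which is the trivial case already treated. Hence $a>0$, and by Vieta's formulas the numbers $\gamma_1,\gamma_2$ defined in the statement satisfy $\gamma_1+\gamma_2 = b/a$ and $\gamma_1\gamma_2 = c/a$, so
\begin{equation*}
 a+bx+cx^2 = a(1+\gamma_1 x)(1+\gamma_2 x).
\end{equation*}
Setting $x=\cos\omega$ and taking logarithms gives, under an appropriate choice of branches,
\begin{equation*}
 \log\bigl[a+b\cos\omega+c\cos^2\omega\bigr] = \log a + \log(1+\gamma_1\cos\omega) + \log(1+\gamma_2\cos\omega).
\end{equation*}
Integrating term by term then reduces the problem to the classical identity
\begin{equation*}
 \int_0^\pi \log(1+\gamma\cos\omega)\,d\omega = \pi\log\frac{1+\sqrt{1-\gamma^2}}{2},
\end{equation*}
applied once for $\gamma=\gamma_1$ and once for $\gamma=\gamma_2$. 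Adding the three contributions produces exactly the claimed formula $\pi\log\bigl[a(1+\sqrt{1-\gamma_1^2})(1+\sqrt{1-\gamma_2^2})/4\bigr]$.

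The main obstacle is the branch-cut bookkeeping: when $b^2<4ac$ the roots $\gamma_1,\gamma_2$ are non-real complex conjugates (so the individual factors $1+\gamma_j\cos\omega$ are complex, even though their product is real and non-negative), and when $\gamma_j\in\R$ one must check that $|\gamma_j|\leq 1$ to apply the classical integral on the nose. For the real case the non-negativity hypothesis on $[-1,1]$ forces $|\gamma_j|\leq 1$, because otherwise one real root of $a+bx+cx^2$ lies strictly inside $(-1,1)$ and the polynomial would change sign there. For the complex case I would justify the formula by analytic continuation: both sides are analytic functions of $(b,c)$ in a neighborhood of any base point with real $\gamma_j\in(-1,1)$, and the square roots $\sqrt{1-\gamma_j^2}$ are to be taken along the continuation that keeps the argument of the resulting logarithm real and positive (this is automatic because $\gamma_1,\gamma_2$ being conjugate makes $(1+\sqrt{1-\gamma_1^2})(1+\sqrt{1-\gamma_2^2})$ the modulus-squared of a single factor). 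A brief verification in one explicit complex example, as carried out in Example~5.2 of the paper with $A,B$ in the roles of $\gamma_1,\gamma_2$, confirms that the stated formula gives the correct real value.
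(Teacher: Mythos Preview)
Your proposal is correct and follows essentially the same route as the paper: split off the degenerate case $a=b=0$, show $a>0$ otherwise, factor $a+bx+cx^2=a(1+\gamma_1 x)(1+\gamma_2 x)$, and reduce to the classical integral $\int_0^\pi\log(1+\gamma\cos\omega)\,d\omega=\pi\log\bigl[(1+\sqrt{1-\gamma^2})/2\bigr]$. The paper simply asserts this last identity for all $z\in\C$ without further comment, so your explicit branch-cut discussion (checking $|\gamma_j|\le 1$ in the real case and invoking analytic continuation for complex-conjugate $\gamma_j$) is in fact more careful than the published argument.
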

 \begin{proof}
  Note that, the assumption $a + b x + c x^2 \geq 0$ for $|x|
\leq 1$, guarantees that $a + b \cos(\omega) + c \cos^2(\omega) \geq 0$ for all
$\omega \in [0, \pi]$, since $|\cos(\omega)| \leq 1$. In particular, it follows
that $a \geq 0$, and for this reason, two cases are considered for the proof.
\begin{itemize}
   \item {\it Case 1:} If $a = 0$, then we must have, necessarily, $b = 0$ and
$c > 0$. Hence
  \begin{align*}
   \int_{0}^\pi \log \left[a + b \cos(\omega) + c \cos^2(\omega)\right] d\omega
   &= \int_{0}^\pi \log \left[c \cos^2(\omega)\right] d\omega = \int_{0}^\pi
\log \,(c)\, d\omega + \int_{0}^\pi \log \left[\cos^2(\omega)\right] d\omega\\
   &= \pi \log\,(c) + 4 \int_{0}^{\pi/2} \log \left[\cos(\omega)\right]
d\omega.
  \end{align*}
    From equation 4.224(6) in Gradshteyn and Ryzhik \cite{gradshteyn07}, we
know
that $\int_{0}^{\pi/2} \log \left[\cos(\omega)\right] d\omega = -\frac{\pi}{2}
\log\,(2)$, hence
  \begin{align*}
   \int_{0}^\pi \log \left[c \cos^2(\omega)\right] d\omega
   &= \pi \log\,(c) - \pi \log\,(4) = \pi \log\left(\frac{c}{4}\right).
  \end{align*}

  \item {\it Case 2:} If $a > 0$, note that
  \begin{equation}
   a + b \cos(\omega) + c \cos^2(\omega) = a\,(1 + \gamma_1 \cos(\omega))(1
+ \gamma_2 \cos(\omega)).
  \end{equation}
  The proof of \eqref{logabc} then follows from the identify
  \begin{equation*}
   \int_{0}^\pi \log \left[1 + z \cos(\omega)\right] d\omega =
\pi \log\left[\frac{1 + \sqrt{1 - z^2}}{2}\right], \quad \mbox{for }\ z \in
\mathbb{C}.
  \end{equation*}
  Indeed, we have
  \begin{align*}
   &\int_{0}^\pi \log \left[a + b \cos(\omega) + c \cos^2(\omega)\right]
d\omega = \int_{0}^\pi \log \left[a\,(1 + \gamma_1 \cos(\omega))(1
+ \gamma_2 \cos(\omega))\right] d\omega \\
   &= \pi \log(a) \, + \int_{0}^\pi \log \left[1 + \gamma_1 \cos(\omega)\right]
d\omega \, + \int_{0}^\pi \log \left[1 + \gamma_2 \cos(\omega)\right] d\omega\\
   &= \pi \log(a) \, + \pi \log\left[\frac{1 + \sqrt{1 -
\gamma_1^2}}{2}\right]+\pi \log\left[\frac{1 + \sqrt{1 -
\gamma_2^2}}{2}\right]\\
   &= \pi \log\left[\frac{a\,(1+\sqrt{1-\gamma_1^2})
(1+\sqrt{1-\gamma_2^2})}{4}\right].
  \end{align*}
 \end{itemize} \vspace{-10mm} \end{proof}

\section*{Acknowledgments}

M.J.\ Karling was supported by Coordena\c{c}\~{a}o de Aperfei\c{c}oamento de
Pessoal de N\'{i}vel Superior (CAPES)-Brazil (1736629) and Conselho Nacional
de Desenvolvimento Cient\'{i}fico e Tecnol\'{o}gico (CNPq)-Brazil
(170168/2018-2). A.O.\ Lopes' research was partially supported by CNPq-Brazil
(304048/2016-0). S.R.C.\ Lopes' research was partially supported by
CNPq-Brazil (303453/2018-4). The authors would like to thank  H.H.\ Ferreira
for supplying us with an automated form of the matrix $D_n$ in Wolfram
Mathematica.

\section*{Declaration of competing interest}

None declared.



\begin{thebibliography}{99}
\footnotesize

 \bibitem{andelic20}
 {\sc Andeli\'{c}, M. and da Fonseca, C. M.} (2020). Some determinantal
considerations for pentadiagonal matrices. {\em Linear and Multilinear
Algebra}. Available at https://doi.org/10.1080/03081087.2019.1708845.

 \bibitem{bartle95}
 {\sc Bartle, R. G.} (1995). {\em The elements of integration and Lebesgue
measure}. John Wiley \& Sons, New York.

 \bibitem{bercu97}
 {\sc Bercu, B., Gamboa, F. and Rouault, A.} (1997). Large deviations for
 quadratic forms of stationary Gaussian processes. {\em Stochastic Processes
 and their Applications} {\bf 71(1),} 75--90.

 \bibitem{brockwell91}
 {\sc Brockwell, P. J. and Davis, R. A.} (1991). {\em Time Series: Theory and
 methods}, 2nd~edn. Springer, New York.

 \bibitem{cinkir12}
 {\sc Cinkir, Z.} (2012). An elementary algorithm for computing the determinant
 of pentadiagonal Toeplitz matrices. {\em Journal of Computational and Applied
 Mathematics} {\bf 236(9),} 2298--2305.

 \bibitem{elouafi11}
 {\sc Elouafi, M.} (2011). An eigenvalue localization theorem for
 pentadiagonal symmetric Toeplitz matrices. {\em Linear Algebra ans its
 Applications} {\bf 435(11),} 2986--2998.

 \bibitem{elouafi13}
 {\sc Elouafi, M.} (2013). A note for an explicit formula for the determinant
 of pentadiagonal and heptadiagonal symmetric Toeplitz matrices. {\em Applied
 Mathematics and Computation} {\bf 219(9),} 4789--4791.

 \bibitem{elouafi18}
 {\sc Elouafi, M.} (2018). On formulae for the determinant of symmetric
pentadiagonal Toeplitz matrices. {\em Arabian Journal of Mathematics} {\bf
7,} 91--99.

 \bibitem{fasino88}
 {\sc Fasino, D.} (1988). Spectral and structural properties of some
 pentadiagonal symmetric matrices. {\em Calcolo} {\bf 25,} 301--310.

 \bibitem{gilbert91}
 {\sc Gilbert, G. T.} (1991). Positive Definite Matrices and Sylvester's
Criterion. {\em The American Mathematical Monthly} {\bf 98(1),} 44-46.

 \bibitem{gradshteyn07}
 {\sc Gradshteyn, I. S. and Ryzhik, I. M.} (2007). {\em Table of Integrals,
 Series, and Products}, 7th~edn. Academic Press, San Diego.

 \bibitem{grenander58}
 {\sc Grenander, U. and Szeg\"{o}, G.} (1958). {\em Toeplitz Forms and Their
 Applications}, 2nd~edn. Cambridge University Press, Cambridge.

 \bibitem{horn13}
 {\sc Horn, R. A.} (2013). {\em Matrix Analysis}, 2nd~edn. Cambridge
 University Press, New York.

 \bibitem{jia12}
 {\sc Jia, J., Kong, Q. and Sogabe, T.} (2012). A new algorithm for solving
 nearly  penta-diagonal Toeplitz linear systems. {\em Computers and Mathematics
 with  Applications} {\bf 63(7),} 1238-1243.

 \bibitem{jia16}
 {\sc Jia, J., Yang, B. and Li, S.} (2016). On a homogeneous recurrence
 relation  for the determinants of general pentadiagonal Toeplitz matrices.
 {\em Computers and Mathematics with Applications} {\bf 71(4),} 1036--1044.

 \bibitem{karling20}
 {\sc Karling, M. J., Lopes, A. O. and Lopes, S. R. C.} (2020). Explicit
 bivariate rate functions for large deviations in AR(1) and MA(1) processes
 with Gaussian innovations. Available at http://arxiv.org/abs/2102.09637.

 \bibitem{marr88}
 {\sc Marr, R. B. and Vineyard, G. H.} (1988). Five-Diagonal Toeplitz
 Determinants an Their Relation to Chebyshev Polynomials. {\em SIAM Journal on
 Matrix Analysis and Applications} {\bf 9(4),} 579--586.

 \bibitem{mcnally10}
 {\sc McNally, J. M.} (2010). A fast algorithm for solving diagonally dominant
 symmetric pentadiagonal Toeplitz systems. {\em Journal of Computational and
 Applied Mathematics} {\bf 234(4),} 995--1005.

 \bibitem{nemani10}
 {\sc Nemani, S. S.} (2010). A fast algorithm for solving Toeplitz
 penta-diagonal systems. {\em Applied Mathematics and Computation}
 {\bf 215(11),} 3830--3838.

 \bibitem{shumway16}
 {\sc Shumway, R. H. and Stoffer, D. S.} (2016). {\em Time Series Analysis and
 its Applications: With R Examples}, 4th~edn. Springer, New York.

 \bibitem{sogabe08}
 {\sc Sogabe, T.} (2008). A fast numerical algorithm for the determinant of a
 pentadiagonal matrix. {\em Applied Mathematics and Computation}
 {\bf 196(2),} 835--841.

 \bibitem{solary20}
 {\sc Solary, M. S.} (2020). Computational properties of pentadiagonal and
anti-pentadiagonal block bandmatrices with perturbed corners. {\em Soft
Computing} {\bf 24,} 301--309.

 \bibitem{sweet69}
 {\sc Sweet, R. A.} (1969). A recursive relation for the determinant of a
 pentadiagonal matrix. {\em Communications of the ACM} {\bf 12(6),} 330--332.

 \bibitem{tyrtyshnikov94}
 {\sc Tyrtyshnikov, E. E.} (1994). Influence of matrix operations on the
 distribution of eigenvalues and singular values of Toeplitz matrices. {\em
 Linear Algebra and its Applications} {\bf 207,} 225--249.

 \bibitem{wang15}
 {\sc Wang, C., Li, H. and Zhao, D.} (2015). An explicit formula for the
 inverse of a pentadiagonal Toeplitz matrix. {\em Journal of Computational and
 Applied Mathematics} {\bf 278,} 12--18.

 \bibitem{zhao08}
 {\sc Zhao, X -L. and Huang, T -Z.} (2008). On the inverse of a general
 pentadiagonal matrix. {\em Applied Mathematics and Computation} {\bf 202,}
 639--646.

\end{thebibliography}
\end{document}